\documentclass{amsart}
\usepackage{amssymb}
\usepackage{mathtools}
\usepackage[only,llbracket,rrbracket]{stmaryrd}
\usepackage[svgnames]{xcolor}
\usepackage[unicode,
  colorlinks=true,
  linktocpage=true,
  citecolor=OliveDrab,
  linkcolor=DarkMagenta,
  urlcolor=DarkMagenta,
  pdftitle={Coskeletality and the higher Segal conditions},
  pdfauthor={Philip Hackney}
  ]{hyperref}
\usepackage[capitalise,noabbrev]{cleveref}
\usepackage{tikz-cd}
\usepackage{microtype}
\usepackage[T1]{fontenc}
\usepackage{enumitem}

\setlist[enumerate,1]{left=0pt, label={\arabic*.}, ref=\arabic*}

\usepackage{comment}

\newcommand{\cube}[1]{\llbracket #1 \rrbracket}

\newcommand{\lowev}{\mathsf{le}}
\newcommand{\lowod}{\mathsf{lo}}
\newcommand{\uppev}{\mathsf{ue}}
\newcommand{\uppod}{\mathsf{uo}}

\newcommand{\ps}{\mathcal{P}}
\newcommand{\op}{\textup{op}}

\newcommand{\sset}{\mathsf{sSet}}

\newcommand{\bim}{{\bar \imath}}
\newcommand{\bjm}{{\bar \jmath}}

\newcommand{\decbot}{\operatorname{dec}_\bot}
\newcommand{\dectop}{\operatorname{dec}_\top}

\DeclareMathOperator{\cosk}{cosk}
\DeclareMathOperator{\sk}{sk}
\DeclareMathOperator{\map}{map}

\newtheorem{theorem}{Theorem}
\newtheorem{proposition}[theorem]{Proposition}

\newtheorem{lemma}[theorem]{Lemma}

\theoremstyle{definition}
\newtheorem{definition}[theorem]{Definition}

\theoremstyle{remark}
\newtheorem{remark}[theorem]{Remark}

\begin{document}

\title{Coskeletality and the higher Segal conditions}

\author{Philip Hackney}
\address{Department of Mathematics, University of Louisiana at Lafayette}
\email{philip@phck.net} 
\urladdr{http://phck.net}

\thanks{
This work was supported by a grant from the Simons Foundation (\#850849, PH). 
The author was partially supported by the Louisiana Board of Regents through the Board of Regents Support fund LEQSF(2024-27)-RD-A-31.
}

\date{\today}

\begin{abstract}
A simplicial set is $d$-Segal if and only if it is $(d{+}1)$-coskeletal and satisfies the $d$-Segal condition in the two lowest relevant simplicial dimensions.
\end{abstract}

\maketitle

A simplicial set $X$ is $n$-coskeletal if it is determined by its simplices of dimension at most $n$, together with unique fillers for higher-dimensional simplex boundaries.\footnote{Precisely, $X$ is right Kan extended along $\Delta_{\leq n} \hookrightarrow \Delta$ from its restriction to $\Delta_{\leq n}$.} 
It is well-known that a simplicial set is the nerve of a category if and only if it is 2-coskeletal\footnote{In the reverse direction, one may actually assume that the simplicial set is 3-coskeletal rather than 2-coskeletal.} and has unique fillers for inner horns in simplicial dimension 2 and 3.\footnote{\cite{nlab:simplicial_skeleton}, \cite[Lemma 5.2]{Riehl:SSCAQC}}
There is a similar result for 2-Segal sets \cite{DyckerhoffKapranov:HSS} (also known as discrete decomposition spaces \cite{GKT1}):
a simplicial set is 2-Segal if and only if it is 3-coskeletal and satisfies the 2-Segal condition for the square and the pentagon (that is, it satisfies the 2-Segal conditions in simplicial dimensions 3 and 4).\footnote{\cite[Corollary 1.7]{BOORS:2SSetsWC}, \cite[\S4.1]{Stern:P2SC} -- the latter source observes that one may replace 3-coskeletal by 4-coskeletal in the reverse direction.}
In this paper, we prove an analogous result for $d$-Segal sets, resolving a question of Walker Stern: a simplicial set is upper (resp.\ lower) $d$-Segal if and only if it is $(d{+}1)$-coskeletal and satisfies the upper (resp.\ lower) $d$-Segal conditions in simplicial dimensions $d+1$ and $d+2$ (the lowest two non-vacuous dimensions).
See \cref{thm main theorem} for a slightly stronger statement.
The corresponding statement for simplicial spaces is false (\cref{rmk simplicial anima}).

The higher Segal conditions were introduced by Dyckerhoff and Kapranov \cite{Dyckerhoff:CPOC,DyckerhoffKapranov:HSS} as exactness conditions arising from triangulations of cyclic polytopes, generalizing the classical Segal conditions underlying models for $(\infty,1)$-categories.
The case $d=2$ has proven especially fruitful, with applications to Hall algebras, incidence coalgebras, and other constructions in representation theory, algebraic combinatorics, and algebraic K-theory \cite{Dyckerhoff:HCAHA,GKT:DSC}. 
The situation for general $d$ is considerably less developed: besides \cite{Dyckerhoff:CPOC,Poguntke:HSSAKT}, see \cite{DyckerhoffJassoWalde:SSHART,HackneyLynd:HSSPG,Walde:HSSHE} for some of the first steps in this direction.

Our criterion allows for an effective, finite check of $d$-Segality for simplicial sets with finitely many nondegenerate simplices (for $d$ suitably large).
This rests on \cite[Theorem 3.19]{KRRZ:LTSS}, which states that $n$-skeletal simplicial sets (with $n > 1$) are automatically $(2n{-}1)$-coskeletal.
This was used to compute that $\Delta^n/\partial \Delta^n$ is $2n$-Segal but not lower $(2n{-}1)$-Segal for $2 \leq n \leq 7$; we proceeded to prove this for general $n \geq 1$ \cite{Hackney:SdSS}. 
Thus the strictness of the hierarchy of higher Segal conditions is visible in rather simple and familiar examples.

\section{Simplicial conventions}
In this paper $\Delta$ will refer to the category whose objects are nonempty finite sets of integers and whose morphisms are order-preserving maps.
The usual simplicial indexing category is the skeletal subcategory of $\Delta$ whose objects are the intervals $[n] = [0,n] = \{0, 1, \dots, n\}$ for $n\geq 0$.
We'll write $X(S)$ for $X$ evaluated at $S\in \Delta$; we occasionally abbreviate $X([n])$ to $X_n$.

The opposite $X^\op$ of a simplicial object $X$ is given by $X^\op(S) = X(-S)$, and sends $f\colon S \to T$ to
\[
	X(f^-) \colon X(-T) \to X(-S),
\]
where $f^- \colon (-S) \to (-T)$ is given by $x\mapsto -f(-x)$ for $x\in -S$.

If $S\in \Delta$ is a finite ordered set with more than one element and $i\in S$, we'll write $S_i = S\setminus \{i\}$. 
If $X$ is a simplicial set, we'll write $e_i \colon X(S) \to X(S_i)$ for the restriction.
This is essentially the face map $d_i \colon X_n \to X_{n-1}$ (where $n+1 = |S|$), but has the more convenient interchange property $e_i e_j = e_j e_i$ for $i\neq j$ in $S$.

\subsection{D\'ecalage} 
The \emph{upper d\'ecalage} of $X$, denoted $\dectop X$, is obtained by shifting simplicial dimension by one (i.e.\ $(\dectop X)_n = X_{n+1}$ for $n\geq 0$) and forgetting about the top face and degeneracy maps \cite[VI.1]{Illusie:CCD2}.
More concretely in our setting: $(\dectop X)(S)$ is defined to be $X(S^\triangleright)$ where $S^\triangleright = S \cup \{ \max(S) + 1 \}$, and, if $f\colon S \to T$ is in $\Delta$, then $(\dectop X) (f) \colon (\dectop X)(T) \to (\dectop X)(S)$ is defined to be $X(f^\triangleright)$ where $f^\triangleright \colon S^\triangleright \to T^\triangleright$ is the extension of $f$ which preserves maximal elements.
Similarly the \emph{lower d\'ecalage} $\decbot X$ is given by $(\decbot X)(S) = X(S^\triangleleft)$ and $(\decbot X)(f) = X(f^\triangleleft)$, where $S^\triangleleft = \{\min(S) - 1\} \cup S$, and $f^\triangleleft$ is the extension which  preserves minimal elements.
Since $(f^\triangleright)^- = (f^-)^\triangleleft$, 
the simplicial objects $\dectop (X^\op)$ and $(\decbot X)^\op$ are equal. 

\subsection{Coskeletality}

If $S \in \Delta$ and a simplicial set $X$ are fixed in a discussion, a symbol like $f_i$ will generally be used for an element of $X(S_i)$.

\begin{definition}
Let $X$ be a simplicial set, $U \subset S$, and $f_i \in X(S_i)$ a collection of elements (for $i\in U$).
\begin{itemize}
\item If $e_i f_j = e_j f_i \in X(S_{i,j})$ for all $i\neq j \in U$, we say $\{f_i\}_{i\in U}$ is \emph{compatible}.
\item Suppose additionally that $t \in S \setminus U$ and $f_t \in X(S_t)$. If $e_i f_t = e_t f_i$ for all $i\in U$, then we say $f_t$ is \emph{compatible with} the collection $\{f_i\}_{i\in U}$.
\end{itemize}
\end{definition}

We'll use the following simple compatibility lemma repeatedly.

\begin{lemma}\label{triv lemma}
Let $f_i \in X(S_i)$ for each $i$ in a three element subset $\{p,q,t\} \subset S$. 
If $e_qf_t = e_tf_q$ and $e_qf_p = e_p f_q$, then $e_q e_p f_t = e_q e_t f_p$.
\end{lemma}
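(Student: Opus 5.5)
The plan is to compute $e_q e_p f_t$ directly, using the two hypotheses together with the interchange identity $e_i e_j = e_j e_i$ for distinct $i,j$. All expressions land in $X(S_{p,q,t})$, where $S_{p,q,t} = S \setminus \{p,q,t\}$. Each restriction map removes one element, and the interchange property makes the order of removal irrelevant.

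The key steps, in order, are as follows.

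1. Apply interchange to the left-hand side: $e_q e_p f_t = e_p e_q f_t$. Here $e_q f_t \in X(S_{q,t})$, so both sides are elements of $X(S_{p,q,t})$.

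2. Use the first hypothesis $e_q f_t = e_t f_q$ to obtain $e_p e_q f_t = e_p e_t f_q$.

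3. Apply interchange again: $e_p e_t f_q = e_t e_p f_q$.

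4. Use the second hypothesis $e_p f_q = e_q f_p$ (this is the stated hypothesis $e_q f_p = e_p f_q$ read backwards) to get $e_t e_p f_q = e_t e_q f_p$.

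5. A final interchange gives $e_t e_q f_p = e_q e_t f_p$, which is the right-hand side.

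Chaining these equalities yields
\[
e_q e_p f_t = e_p e_q f_t = e_p e_t f_q = e_t e_p f_q = e_t e_q f_p = e_q e_t f_p .
\]

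There is no genuine obstacle here. The only point needing care is bookkeeping: each $e_i$ must be applied to an element of a set $X(T)$ with $i \in T$, and $T$ must have more than one element. Both conditions hold because $p, q, t$ are distinct and each $f_i$ lies in $X(S_i)$ with the other two indices still present.
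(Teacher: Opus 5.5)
Your proof is correct and matches the paper's argument exactly: it is the same chain $e_q e_p f_t = e_p e_q f_t = e_p e_t f_q = e_t e_p f_q = e_t e_q f_p = e_q e_t f_p$, alternating interchange with the two hypotheses. One small point on your bookkeeping remark: distinctness of $p,q,t$ does not by itself give that $S_{p,q,t}$ is nonempty. That nonemptiness is a presupposition of the statement being well-formed, and it does not affect the argument.
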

\begin{proof} $e_q e_p f_t = e_p e_q f_t = e_p e_t f_q = e_t e_p f_q = e_t e_q f_p = e_q e_t f_p$.
\end{proof}

A compatible collection where $U=S=[m]$ is called an \emph{$m$-sphere in $X$} in \cite[Definition 3.2]{KRRZ:LTSS}.
Coskeletality is about unique filling of $m$-spheres: 

\begin{definition}\label{def n-coskel}
A simplicial set $X\in \sset$ is \emph{$n$-coskeletal} if, whenever $S = [m]$ for $m > n$ and $\{ f_i \}_{i\in S}$ is a compatible collection, there exists a unique $F \in X_m$ such that $e_i F = f_i$ for all $0\leq i \leq m$.
\end{definition}

There are multiple equivalent formulations of $n$-coskeletality. 
For instance we could ask that $X$ is orthogonal to the boundary inclusions $\partial \Delta^m \to \Delta^m$ for all $m > n$, or we could ask that it is in the image of the right Kan extension from $n$-truncated simplicial sets (presheaves on $\Delta_{\leq n}$).
See also \cite[\href{https://kerodon.net/tag/051Z}{Tag 051Z}]{kerodon}.

\section{Background: Higher Segal Conditions}

We closely follow the presentation of the higher Segal conditions from \cite{HackneyLynd:HSSPG}, which is based on Walde's theorem \cite{Walde:HSSHE}.
See \cite{Dyckerhoff:CPOC} for a recent survey on higher Segal spaces.

A \emph{gapped subset} $I$ of a finite totally ordered set $S$ is a proper subset which does not contain a pair of adjacent elements of $S$.
We will often abbreviate `gapped set of cardinality $k+1$' by `gapped set' when the number $k$ is fixed in the discussion.

Given a gapped set $I\subset S$ of cardinality $k+1$, there is a cube
\[
	\cube{I} = \cube{I\subset S} \colon \ps(I) \to \Delta^\op
\]
sending $U \subset I$ to $S\setminus U$.
If $X \colon \Delta^\op \to \mathcal{C}$ is a simplicial object in a category or $\infty$-category $\mathcal{C}$, we write $X\cube{I} \colon \ps(I) \to \mathcal{C}$ for the corresponding composite cube.

\begin{definition}[Higher Segal conditions]\label{def higher Segal}
Let $k$ be a nonnegative integer and $X$ a simplicial object in $\mathcal{C}$.
Consider the collection of gapped subsets $I \subset [n]$ of cardinality $k+1$ as $n$ varies and the associated collection of cubes $X\cube{I} \colon \ps(I) \to \mathcal{C}$.
We say that $X$ is
\begin{enumerate}
\item \emph{lower $(2k{-}1)$-Segal} if $X\cube{I}$ is cartesian for all such $I$,
\item \emph{lower $2k$-Segal} if $X\cube{I}$ is cartesian whenever $0 \notin I$,
\item \emph{upper $2k$-Segal} if $X\cube{I}$ is cartesian whenever $n \notin I$, and
\item \emph{upper $(2k{+}1)$-Segal} if $X\cube{I}$ is cartesian whenever $0 \notin I$ and $n \notin I$.
\end{enumerate}
\end{definition}

We say $X$ is $d$-Segal if it is both upper and lower $d$-Segal.
The lower 1-Segal condition is equivalent to the usual Segal condition.
When $k=0$, all four cases coincide, and amount to $X$ being a constant simplicial object (\cite[Example~3.9]{Dyckerhoff:CPOC}, \cite[Remark~3.11]{HackneyLynd:HSSPG}).
Poguntke showed that if $X$ is upper or lower $d$-Segal, then it is also both upper and lower $(d{+}1)$-Segal (\cite[Theorem~5.1]{Dyckerhoff:CPOC}, \cite[Proposition~3.14]{HackneyLynd:HSSPG}, \cite[Proposition~2.10]{Poguntke:HSSAKT}).

\begin{remark}
As mentioned in \cite[Remark 3.8]{HackneyLynd:HSSPG}, if $X$ is a simplicial set, then $X$ is lower $(2k{-}1)$-Segal if and only if for each gapped set $I\subset S$ of cardinality $k+1$ and each compatible collection $\{f_i\}_{i\in I}$, there exists a unique $F\in X(S)$ with $e_i F = f_i$ for all $i\in I$.
A similar statement holds for the other higher Segal conditions above, and for definitions \ref{def segal properties} and \ref{def d critical} below.
This is the form we most often use these properties later.
\end{remark}

We separate the conditions from \cref{def higher Segal} by simplicial dimension.

\begin{definition}\label{def segal properties}
Let $k,n$ be nonnegative integers and $X$ a simplicial object.
Consider the collection of gapped subsets $I \subset [n]$ of cardinality $k+1$ and the associated collection of cubes $X\cube{I} \colon \ps(I) \to \mathcal{C}$.
We say that $X$ has property
\begin{enumerate}
\item $\lowod^k_n$ if $X\cube{I}$ is cartesian for all $I \subset [n]$,
\item $\lowev^k_n$ if $X\cube{I}$ is cartesian whenever $0 \notin I \subset [n]$,
\item $\uppev^k_n$ if $X\cube{I}$ is cartesian whenever $n \notin I \subset [n]$, and
\item $\uppod^k_n$ if $X\cube{I}$ is cartesian whenever $0,n \notin I \subset [n]$.
\end{enumerate}
\end{definition}

Notice that $X$ satisfies $\lowod^k_n$ or $\uppod^k_n$ if and only if $X^\op$ does so, and $X$ satisfies $\lowev^k_n$ if and only if $X^\op$ satisfies $\uppev^k_n$.
This allows us to infer some theorems by \emph{duality}.
If $k$ is positive,\footnote{Condition $\lowod^0_0$ is vacuous since gapped sets are proper, while $\lowev_1^0$, $\uppev_1^0$, and $\uppod_2^0$ are non-vacuous.} the first non-vacuous conditions are $\lowod^k_{2k}$, $\lowev^k_{2k+1}$, $\uppev^k_{2k+1}$, and $\uppod^k_{2k+2}$.
It will be important to isolate the first \emph{two} simplicial dimensions where there is something to check; see also \cref{rmk d crit}.

\begin{definition}\label{def d critical}
We say that a simplicial set is \emph{upper (resp.\ lower) $d$-critical} if it satisfies the upper (resp.\ lower) $d$-Segal conditions in the lowest two non-vacuous simplicial dimensions ($d+1$ and $d+2$).
This means that our simplicial set is
\begin{enumerate}
\item lower $(2k{-}1)$-critical if it satisfies $\lowod^k_{2k}$ and $\lowod^k_{2k+1}$,
\item lower $2k$-critical if it satisfies $\lowev^k_{2k+1}$ and $\lowev^k_{2k+2}$,
\item upper $2k$-critical if it satisfies $\uppev^k_{2k+1}$ and $\uppev^k_{2k+2}$, and 
\item upper $(2k{+}1)$-critical if it satisfies $\uppod^k_{2k+2}$ and $\uppod^k_{2k+3}$.
\end{enumerate}
\end{definition}

We next have Poguntke's \emph{path space criterion} \cite[Proposition 2.7]{Poguntke:HSSAKT}.\footnote{See also \cite[Theorem 6.3.2]{DyckerhoffKapranov:HSS} and \cite[Theorem 4.10]{GKT1}}
The similar statement replacing `$d$-Segal' with `$d$-critical' holds by \cref{easy psc} below.

\begin{theorem}[Path Space Criterion]\label{path space criterion}
Let $X$ be a simplicial object and $k\geq 1$.
The simplicial object 
$X$ is lower (resp.\ upper) $2k$-Segal if and only if $\decbot X$ (resp.\ $\dectop X$) is lower $(2k{-}1)$-Segal.
The simplicial object $X$ is upper $(2k{+}1)$-Segal if and only if $\decbot X$ is upper $2k$-Segal if and only if $\dectop X$ is lower $2k$-Segal if and only if $\dectop \decbot X = \decbot \dectop X$ is lower $(2k{-}1)$-Segal. \qed
\end{theorem}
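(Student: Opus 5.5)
The plan is to reduce every equivalence to a statement about individual cubes. The key observation is that décalage simply reindexes the cubes appearing in \cref{def higher Segal}.

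Take a gapped set $I \subset [n]$ of cardinality $k+1$. The cube $(\decbot X)\cube{I}$ sends $U \subset I$ to
\[
	X\bigl(([n]\setminus U)^\triangleleft\bigr) = X\bigl(\{-1\}\cup [n] \setminus U\bigr).
\]
After the order isomorphism $\{-1,\dots,n\}\cong[n+1]$, this is the cube $X\cube{I+1 \subset [n+1]}$. The shifted set $I+1$ is gapped in $[n+1]$ and never contains $0$. Conversely, every gapped $J\subset[n+1]$ with $0\notin J$ has the form $I+1$. For such $J$, the set $I=J-1\subset[n]$ is again gapped: it avoids adjacent pairs, and it is proper because $|J|=k+1\le n$ whenever a gapped set of this size exists. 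The restriction maps match because $f^\triangleleft$ preserves the minimum.

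So $\decbot X$ satisfies $\lowod^k_n$ exactly when $X$ satisfies $\lowev^k_{n+1}$. Ranging over all $n$ gives the lower $2k$ statement. Similarly, $\dectop$ shifts nothing but adjoins the new element $n+1$ at the top. It therefore gives a bijection between gapped $I\subset[n]$ and gapped $J\subset[n+1]$ with $n+1\notin J$. Hence $\dectop X$ satisfies $\lowod^k_n$ if and only if $X$ satisfies $\uppev^k_{n+1}$, which is the upper $2k$ statement.

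The odd statements follow by the same bookkeeping with an extra endpoint condition. For $\decbot X$, the condition that $n\notin I$ becomes $n+1\notin J$ together with $0\notin J$. So $\decbot X$ is upper $2k$-Segal if and only if $X$ is upper $(2k{+}1)$-Segal. For $\dectop X$, the condition that $0\notin I$ persists, and the new top element is automatically excluded. So $\dectop X$ is lower $2k$-Segal if and only if $X$ satisfies $\uppod^k$ in each dimension. Applying both décalages, $\dectop\decbot X=\decbot\dectop X$ is lower $(2k{-}1)$-Segal if and only if $X$ is upper $(2k{+}1)$-Segal. Alternatively, this last step follows by composing the previous equivalences, or by duality using $\dectop(X^\op)=(\decbot X)^\op$.

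The main obstacle is only careful checking of edge cases: properness of the shifted gapped sets, and the identification of the cube maps via $f^\triangleleft$ and $f^\triangleright$. Because this argument works dimension by dimension ($\lowod^k_n \leftrightarrow \lowev^k_{n+1}$, and so on), it also yields the version with `$d$-critical' in place of `$d$-Segal' mentioned before the theorem.
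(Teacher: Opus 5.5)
Your proposal is correct and takes the same approach as the paper: the paper cites Poguntke for this theorem, but the dimension-wise \cref{easy psc}, which immediately implies it, is proved by exactly this reindexing of cubes under décalage (as in Hackney--Lynd, Prop.~3.13). One small point of precision: when $0\in U$ the set $([n]\setminus U)^\triangleleft$ adjoins $0$ rather than $-1$, and when $n\in U$ the set $([n]\setminus U)^\triangleright$ adjoins $n$ rather than $n+1$, so your displayed equality is really a canonical natural isomorphism of cubes, which suffices for cartesianness.
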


\begin{proposition}\label{easy psc}
Let $X$ be a simplicial object and $k\geq 1$.
\begin{enumerate}
		\item $X$ satisfies $\lowev^k_n$ if and only if $\decbot X$ satisfies $\lowod^k_{n-1}$.\label{PSC ldec}
		\item $X$ satisfies $\uppev^k_n$ if and only if $\dectop X$ satisfies $\lowod^k_{n-1}$.\label{PSC udec}
		\item The following are equivalent:
		\begin{enumerate}[label={\alph*.}, ref=\alph*]
		\item $X$ satisfies $\uppod^k_n$.
		\item $\decbot X$ satisfies $\uppev^k_{n-1}$.
		\item $\dectop X$ satisfies $\lowev^k_{n-1}$.
		\item $\decbot \dectop X = \dectop \decbot X$ satisfies $\lowod^k_{n-2}$.
		\end{enumerate}
\end{enumerate}
\end{proposition}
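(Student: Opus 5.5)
We argue directly from the cube description: each property is the assertion that certain cubes $X\cube{I}$ are cartesian, so it suffices to match cubes on both sides. The key observation is that décalage carries gapped sets to gapped sets and cubes to cubes. Fix $T \in \Delta$ with $|T| = n$ (so $T \cong [n-1]$) and a gapped subset $J \subset T$ of cardinality $k+1$. Write $S = T^\triangleleft$, which is isomorphic to $[n]$ with new minimum $m = \min(T)-1$.

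First we check that $J \subset S$ is gapped with $m \notin J$. Adjacency in $S$ restricted to $T$ is adjacency in $T$, so no two elements of $J$ are adjacent in $S$. Properness is automatic, since $m \notin J$.

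Next we compare the cubes. For $U \subset J$ we have $(T \setminus U)^\triangleleft = S \setminus U$. The inclusions $T\setminus U' \subset T \setminus U$ extend to the corresponding inclusions of $S$ preserving the minimum. Hence $(\decbot X)\cube{J \subset T} = X\cube{J \subset S}$ as cubes $\ps(J) \to \mathcal{C}$, literally and not merely up to isomorphism.

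Conversely, we must show every gapped $I \subset S$ of cardinality $k+1$ with $\min S \notin I$ arises this way from $J = I \subset T = S \setminus \{\min S\}$. The only thing to verify is that $I$ is proper in $T$. If $I = T$, then $|T| = k+1 \geq 2$ (using $k \geq 1$), so $T$ contains two adjacent elements, contradicting gappedness. The hypothesis $k\ge1$ is needed exactly here. This bijection of cubes proves (\ref{PSC ldec}), after transporting along the order isomorphism $S \cong [n]$ so that $0 \notin I$ is the condition $\min S \notin I$.

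Part (\ref{PSC udec}) follows by the mirror argument with $S = T^\triangleright$ and the condition $\max S \notin I$. Alternatively it follows by duality: $\dectop X = (\decbot X^\op)^\op$, and $\lowev$ and $\uppev$ swap under ${}^\op$ while $\lowod$ is self-dual.

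For part 3 we use the same bijection, now keeping track of which extremal condition survives. Applying $\decbot$ identifies cubes for $I \subset S=T^\triangleleft$ with $\min S \notin I$ and cubes for $I \subset T$. Under this identification, $\max S = \max T \notin I$ corresponds to $\max T \notin I$. Thus $\uppod^k_n$ for $X$ matches $\uppev^k_{n-1}$ for $\decbot X$, which gives a $\Leftrightarrow$ b. Symmetrically, using $\dectop$, a $\Leftrightarrow$ c. For a $\Leftrightarrow$ d, apply both extensions: $S = T^{\triangleleft\triangleright}$, where gapped $I$ avoiding both endpoints correspond to arbitrary gapped $I \subset T$. Properness again uses $k\ge 1$.

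The main obstacle is only bookkeeping: making sure the identification of cubes is an equality of diagrams, with functoriality matching because $f^\triangleleft$ of an inclusion is the inclusion. The other point requiring care is the properness check, which is where $k \geq 1$ enters.
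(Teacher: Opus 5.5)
Your proposal is correct and is essentially the argument the paper points to (it says only ``as in the proof of Proposition 3.13'' of Hackney--Lynd). That argument matches cubes of $\decbot X$, $\dectop X$, $\decbot\dectop X$ on gapped $J\subset T$ with cubes of $X$ on gapped subsets of $T^\triangleleft$, $T^\triangleright$, $T^{\triangleleft\triangleright}$ that avoid the adjoined endpoint(s). As you say, $k\geq 1$ enters only through the properness check.

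One claim needs correcting, though the conclusion survives. The identification is \emph{not} a literal equality in this paper's convention, so the statement that ``$f^\triangleleft$ of an inclusion is the inclusion'' is false in general.

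\begin{itemize}
\item Since $(T\setminus U)^\triangleleft=\{\min(T\setminus U)-1\}\cup(T\setminus U)$, whenever $\min T\in U$ the adjoined point $\min(T\setminus U)-1$ differs from $\min T-1$. For example, with $T=[2]$, $J=\{0,2\}$, $U=\{0\}$, you get $(T\setminus U)^\triangleleft=\{0,1,2\}$, whereas $S\setminus U=\{-1,1,2\}$.
\item For $U\subset U'$ with $\min T\in U'\setminus U$, the extension $f^\triangleleft$ of $T\setminus U'\subset T\setminus U$ sends the source's adjoined point $\min(T\setminus U')-1$ to $\min T-1$. So $f^\triangleleft$ is not a subset inclusion.
\end{itemize}

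The fix is easy. Take the order isomorphisms $(T\setminus U)^\triangleleft\cong S\setminus U$ that send the adjoined point to $\min T-1$ and are the identity on $T\setminus U$. These are natural in $U$, so they give a natural isomorphism of cubes $(\decbot X)\cube{J\subset T}\cong X\cube{J\subset S}$. That suffices, because being cartesian is invariant under natural isomorphism of cubes; it is the same kind of transport you already use for $S\cong[n]$. The same correction applies to $\dectop$ when $\max T\in U$, and to the two-sided case.
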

\begin{proof}
As in the proof of Proposition 3.13 of \cite{HackneyLynd:HSSPG}.
\end{proof}

\begin{remark}\label{rmk d crit}
In formulating upper or lower $d$-critical in \cref{def d critical}, one could instead use (for simplicial dimension $n=d+1,d+2$) the geometric conditions in \cite[Definition 3.7]{Dyckerhoff:CPOC} (or \cite[Definition 2.2]{Poguntke:HSSAKT}) expressed in terms of triangulations of cyclic polytopes.
This yields an equivalent definition -- it's not so arduous to prove this directly, but one can also see it by a careful reading of Walde's proof of the equivalence of the $(2k{-}1)$-Segal conditions (especially the proof of Theorem 7.2.1 of \cite{Walde:HSSHE}; see \cite[Remark 3.6]{HackneyLynd:HSSPG}) and Poguntke's account of the path space criterion \cite[Proposition 2.7]{Poguntke:HSSAKT}.
Thus, for example, $2$-critical simplicial objects can be described as those local with respect to triangulations of squares and pentagons.
\end{remark}

We end this section with the Wiggle Lemma, whose strategy of proof informs the argument in \cref{prop lower odd implies coskel}.
Immediate consequences of the Wiggle Lemma are \cite[Lemma 3.9]{HackneyLynd:HSSPG} (on which its proof is based) and \cite[Lemma 3.6]{GKT1}.

\begin{lemma}[Wiggling]\label{wiggling}
If $X$ has property $\lowod^k_{n-1}$ and there is a gapped set $I \subset [n]$ of size $k+1$ such that $X\cube{I}$ is cartesian, then $X$ has property $\lowod^k_n$.
Similarly, if $X$ has property $\lowev_{n-1}^k$ (resp.\ $\uppev_{n-1}^k$, resp.\ $\uppod_{n-1}^k$) and there is a gapped set %(of size $k+1$) 
$I\subset [n]$ avoiding $0$ (resp.\ avoiding $n$, resp.\ avoiding $0$ and $n$) such that $X\cube{I}$ is cartesian, then $X$ has property $\lowev_n^k$ (resp.\ $\uppev_n^k$, resp.\ $\uppod_n^k$).
\end{lemma}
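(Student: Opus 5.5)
We prove the $\lowod$ case first; the other three cases then follow by duality and the path space criterion. Assume $X$ satisfies $\lowod^k_{n-1}$ and that $X\cube{I_0}$ is cartesian for some gapped $I_0 \subset [n]$ of size $k+1$. We must show that $X\cube{J}$ is cartesian for every gapped $J\subset [n]$ of size $k+1$. Following the strategy of \cite[Lemma 3.9]{HackneyLynd:HSSPG}, we connect $I_0$ to $J$ by a sequence of elementary moves ("wiggles"). Each move replaces a single element $i\in I$ by $i\pm 1$, and the result must again be gapped. Such a sequence exists: sort the elements of both sets and slide the elements of $I_0$ one step at a time toward the corresponding elements of $J$. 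Move right-moving elements starting with the largest, and left-moving elements starting with the smallest, so that gappedness is preserved. Hence it suffices to treat a single move. Let $I$ and $I'$ be gapped with $I' = (I\setminus\{i\})\cup\{i+1\}$, and suppose $X\cube{I}$ is cartesian; we show $X\cube{I'}$ is cartesian (the reverse direction is symmetric).

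For simplicial sets we work elementwise, using the unique-filler formulation of the Remark. Put $I'' = I\setminus\{i\}$ and $S=[n]$. Let $\{f_j\}_{j\in I'}$ be compatible. We need a unique $F\in X(S)$ with $e_jF=f_j$ for all $j\in I'$.

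The element $f_{i+1}$ lives in $X(S_{i+1})$. Inside $S_{i+1}\cong[n-1]$, the set $I''\cup\{i\}$ is gapped: $i$ is now adjacent to $i+2$, but $i+2\notin I$ since $i+1$ was in $I'$ and $I'$ is gapped. Apply $\lowod^k_{n-1}$ to the collection $\{e_{i+1}f_j\}_{j\in I''}\cup\{e_i f_{i+1}\}$; compatibility is checked with \cref{triv lemma}. This produces the face $f_{i+1}$ itself, which recovers no new information. The useful step goes the other way: we build the missing face $f_i\in X(S_i)$. Inside $S_i$, the set $I''\cup\{i+1\}$ is gapped, so $\lowod^k_{n-1}$ gives a unique $g\in X(S_i)$ with $e_j g = e_i f_j$ for $j\in I''$ and $e_{i+1}g = e_i f_{i+1}$. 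Then check that $\{f_j\}_{j\in I''}\cup\{g\}$ is compatible, again via \cref{triv lemma} and uniqueness in $\lowod^k_{n-1}$ applied to $S_{i,j}$-level data. Cartesianness of $X\cube{I}$ then yields $F$ with $e_jF=f_j$ for $j\in I''$ and $e_iF=g$. Next verify $e_{i+1}F=f_{i+1}$. Both sides are elements of $X(S_{i+1})$ with the same faces at $I''\cup\{i\}$, namely $e_j$ gives $e_{i+1}f_j$, and $e_i$ gives $e_{i+1}g=e_if_{i+1}$. Uniqueness in $\lowod^k_{n-1}$ on $S_{i+1}$ then forces equality. For uniqueness of $F$: any $F'$ with the required faces at $I'$ has $e_iF'$ satisfying the defining property of $g$, so $e_iF' = g$, and uniqueness for $X\cube{I}$ gives $F'=F$.

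For general $\mathcal C$ the same argument runs as a pasting of cartesian cubes. The cube $X\cube{I\cup\{i+1\}}$ is not available, since $I\cup\{i+1\}$ is not gapped. Instead, use the face relations encoded by the two $(n-1)$-dimensional cubes $X\cube{I''\cup\{i\}\subset S_{i+1}}$ and $X\cube{I''\cup\{i+1\}\subset S_i}$, which are cartesian by hypothesis, and combine with $X\cube{I}$ by cube pasting and cancellation. The other cases follow the same way: the moves stay within gapped sets avoiding $0$ (resp.\ $n$, resp.\ both), and the move sequence can be chosen to avoid the forbidden endpoints. Alternatively, deduce them from the $\lowod$ case via \cref{easy psc} applied to $\decbot X$, $\dectop X$, or both, since a cartesian cube for $X$ at an allowed $I$ corresponds to one for the décalage.

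The main obstacle is the compatibility verification for $\{f_j\}_{j\in I''}\cup\{g\}$. Concretely, for $j\in I''$ we must show $e_jg=e_if_j$, which holds by construction, and also that the pairs among $I''$ remain compatible. The subtle point is that $g$ is compatible with $f_j$ at the level $S_{i,j}$. Here one needs $e_j g = e_i f_j$ exactly, with indices interpreted in $S_i$ versus $S$. Getting the gappedness of $I''\cup\{i+1\}$ in $S_i$ right near the endpoints, when $i+1=n$ or $i=0$, is where care is needed.
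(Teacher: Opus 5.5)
Your proof is correct. Its core matches the paper's: the two auxiliary cubes you single out, $X\cube{I'\subset S_i}$ and $X\cube{I\subset S_{i+1}}$, cartesian by $\lowod^k_{n-1}$, are exactly the ones the paper uses (with $\bim=i$, $\bjm=i+1$, $J=I'$). Your reduction of the $\lowev$, $\uppev$, $\uppod$ cases through \cref{easy psc} is also the paper's.

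Where you differ is in how the cubes are combined. The paper argues directly in an arbitrary $\mathcal{C}$. With your $I''=I\setminus\{i\}$, the map of $k$-cubes $X\cube{I''\subset S}\to X\cube{I''\subset S_{i,i+1}}$ factors in two ways: as $X\cube{I\subset S}$ followed by $X\cube{I'\subset S_i}$, and as $X\cube{I'\subset S}$ followed by $X\cube{I\subset S_{i+1}}$. By the pasting lemma for cubes \cite[Lemma 3.3]{HackneyLynd:HSSPG}, each of $X\cube{I}$ and $X\cube{I'}$ is cartesian if and only if this common composite is.

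You instead unwind this elementwise for simplicial sets: build the missing face $g$, fill using $X\cube{I}$, recover the $(i{+}1)$-face by uniqueness over $S_{i+1}$, and get uniqueness of $F$ from uniqueness of $g$. This is close to the elementwise argument the paper runs in the proof of \cref{prop lower odd implies coskel}; there all faces are given, so no $g$ needs to be constructed. Your version buys concreteness and, via representable functors, also covers simplicial objects in ordinary categories. It does not by itself handle simplicial objects in an $\infty$-category, where the lemma is also stated. There your one-line appeal to ``pasting and cancellation'' must be made into the explicit two-way factorization above, which is the paper's entire proof.

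The remaining differences are cosmetic. You restrict to unit moves $i\mapsto i\pm1$ and connect $I_0$ to $J$ directly; your ordering of the moves does preserve gappedness. The paper lets an element move anywhere within its gap and connects everything to $\{0,2,\dots,2k\}$.

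Finally, the step you flag as the main obstacle is not one. Compatibility of $\{f_j\}_{j\in I''}\cup\{g\}$ is immediate from $e_jg=e_if_j$. Likewise, $I'$ is gapped in $S_i$ and $I$ is gapped in $S_{i+1}$ uniformly, since $i-1\notin I'$ and $i+2\notin I$, so no special care is needed at the endpoints.
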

\begin{proof}
If $n < 2k$ then $\lowod^k_n$ is a vacuous condition, while if $n=2k$ then there is exactly one gapped set in $[n]$, so the condition is satisfied by hypothesis. Assume $n > 2k$.

Suppose we have two gapped sets $I, J \subset [n] = S$ which differ in only one position.
In other words, $I\setminus J = \{ \bim \}$ and $J\setminus I = \{ \bjm \}$ such that the successor (resp.\ predecessor) of $\bim$ in $I \cup \{-1, n+1\}$ coincides with the successor (resp.\ predecessor) of $\bjm$ in $J \cup \{-1, n+1\}$.
Notice that $J\subset S_\bim$ and $I \subset S_\bjm$ are also gapped sets.
The following square of maps of cubes commutes.
\[ \begin{tikzcd}[row sep=0.25cm]
& 
X\cube{I_\bim \subset S_\bim} \rar[equals] &[-0.6cm] 
X\cube{J_\bjm \subset S_\bim}
	\drar[bend left=15,"X\cube{J\subset S_\bim}"]
\\
X\cube{I_\bim \subset S} \dar[equals] 
	\urar[bend left=15,"X\cube{I\subset S}"]
& 
&
&
X\cube{J_\bjm \subset S_{\bim,\bjm}} \dar[equals]
\\%[-0.4cm]
X\cube{J_\bjm \subset S}
	\drar[bend right=15,"X\cube{J\subset S}"']
& 
&
&
X\cube{I_\bim \subset S_{\bim,\bjm}}
\\
& 
X\cube{J_\bjm \subset S_\bjm} \rar[equals] &[-0.6cm] 
X\cube{I_\bim \subset S_\bjm}
	\urar[bend right=15,"X\cube{I\subset S_\bjm}"']
\end{tikzcd} \]
The cubes $X\cube{J\subset S_\bim}$ and $X\cube{I\subset S_\bjm}$ are cartesian by $\lowod^k_{n-1}$, so $X\cube{I\subset S}$ is cartesian if and only if $X\cube{J\subset S}$ is cartesian by \cite[Lemma 3.3]{HackneyLynd:HSSPG}.
Iterating this process, we see that $X$ is cartesian for a gapped set $I \subset [n]$ if and only if it is cartesian for $\{0 < 2 < \dots < 2k \}$, and the result follows.
The statements in the second sentence follow from the first via \cref{easy psc}.
\end{proof}

\section{The lower odd Segal case}
\label{sec lower odd}

The next result is strictly about simplicial sets.
See also \cref{hsc implies coskel} below.

\begin{proposition}\label{prop lower odd implies coskel}
If $k\geq 1$ and $X\in \sset$ is lower $(2k{-}1)$-Segal, then $X$ is $2k$-coskeletal.
\end{proposition}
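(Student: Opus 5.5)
The plan is to build the filler from a single gapped set using the lower $(2k{-}1)$-Segal property, and then to show, by a wiggling argument in the style of \cref{wiggling}, that this filler does not depend on the gapped set chosen. Fix $m > 2k$, put $S = [m]$, and let $\{f_i\}_{i\in S}$ be a compatible collection.

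For every gapped set $I \subset S$ of cardinality $k+1$, the subcollection $\{f_i\}_{i\in I}$ is compatible. The lower $(2k{-}1)$-Segal condition, in the form of the Remark after \cref{def higher Segal}, then gives a unique $F_I \in X(S)$ with $e_iF_I = f_i$ for all $i \in I$. Gapped sets exist because $m \geq 2k+1$; for instance $\{0,2,\dots,2k\}$ is a proper subset of $S$. Uniqueness of a full filler $F$ is immediate: any such $F$ satisfies the $I$-conditions, so $F = F_I$. It therefore remains to prove existence, that is, $e_tF_I = f_t$ for every $t\in S$.

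The key local step is the following. Suppose $I$ is gapped in $S$, $t \notin I$, and $I$ is still a gapped subset of $S_t$. Then for each $i\in I$,
\[
e_i(e_tF_I) = e_t e_i F_I = e_t f_i = e_i f_t
\]
by compatibility. So $e_tF_I$ and $f_t$ satisfy the same conditions for the gapped set $I\subset S_t$. Uniqueness in the lower $(2k{-}1)$-Segal condition applied to $S_t$ then gives $e_tF_I = f_t$.

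Now let $I,J$ be gapped sets differing in one position, with $I\setminus J=\{\bim\}$ and $J\setminus I = \{\bjm\}$ as in the proof of \cref{wiggling}. That proof notes that $I\subset S_\bjm$ is gapped. The local step with $t = \bjm$ gives $e_\bjm F_I = f_\bjm$, so $F_I$ satisfies all the $J$-conditions, and hence $F_I = F_J$. As in \cref{wiggling}, any gapped set of size $k+1$ in $[m]$ with $m>2k$ is connected to $\{0,2,\dots,2k\}$ by a chain of such one-position moves. Therefore $F := F_I$ is independent of $I$.

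Finally, every $t \in S$ lies in some gapped set $J$ of cardinality $k+1$. This holds because $m \geq 2k+1$ leaves room to choose $k$ further pairwise nonadjacent elements, nonadjacent to $t$, without exhausting $S$. Hence $e_tF = e_tF_J = f_t$, and $F$ is the required filler.

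The main obstacle I expect is the combinatorics in the last two steps. One must check carefully that one-position wiggles connect all gapped sets; I would borrow the iteration from \cref{wiggling}, shifting elements one at a time. One must also check that every $t$ is contained in some gapped set, which needs the strict inequality $m>2k$ to keep the set proper.
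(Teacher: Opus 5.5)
Your proposal is correct and follows essentially the same approach as the paper: it takes the unique filler from a single gapped set, wiggles one position at a time (you check the condition at $\bjm$ for $F_I$ where the paper checks it at $\bim$ for the $J$-filler, which is symmetric), and concludes using the fact that every $t\in[m]$ lies in some gapped set when $m>2k$.
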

\begin{proof}
Fix $n > 2k$ and let $f_0, \dots, f_n$ be a compatible collection of $(n-1)$-simplices of $X$.
Each gapped subset $I$ of $[n] = S$ gives rise to a unique $n$-simplex $F$ with $e_i F = f_i$ for $i\in I$. 
This immediately implies that there is at most one $n$-simplex having $f_0, \dots, f_n$ as its boundary (since there's at least one gapped set).

The $n$-simplex $F$ does not actually depend on the gapped set $I$; we use a variation of the argument in \cref{wiggling} to show this.
Let $I, J$ be as in that proof, and let $F,F'$ be $n$-simplices such that $e_i F = f_i$ for all $i\in I$, and $e_j F' = f_j$ for all $j\in J$.
As we saw there, $J\subset S_\bim$ is gapped, and we have, for each $j\in J$
\[
	e_j e_\bim F' = e_\bim e_j F' = e_\bim f_j = e_j f_\bim
\]
so we have $e_\bim F' = f_\bim = e_\bim F$.
But now $e_i F' = e_i F$ for all $i\in I$, hence $F'= F$.

Iterating this procedure, we have that if $I' \subset S$ is any gapped set of size $k+1$ and $I = \{0, 2, \dots, 2k\}$, then the $n$-simplex $F'$ associated to $I'$ is equal to the $n$-simplex $F$ associated to $I$.
Since $n$ is strictly larger than $2k$, each $i=0,1,\dots, n$ is a member of \emph{some} gapped set, hence $e_i F = f_i$ for all $i$.
\end{proof}

\begin{remark}\label{rmk extra from proof}
Suppose $n > 2k$ and $X$ satisfies $\lowod_{n-1}^k$ and $\lowod_n^k$.
The preceding proof in fact shows that if $f_0, \dots, f_n$ is a compatible sequence of $(n-1)$-simplices of $X$, then there is a unique $F\in X_n$ with $e_i F = f_i$ for $0 \leq i \leq n$.
\end{remark}

\begin{remark}\label{rmk -1 segal to 0 cosk}
The proof of \cref{prop lower odd implies coskel} fails when $k=0$, and in fact the result is not true.
The `lower $-1$-Segal' sets are the constant simplicial sets, i.e.\ nerves of discrete categories, while $0$-coskeletal simplicial sets are nerves of codiscrete categories.
\end{remark}

\begin{theorem}\label{thm lowod case coskel to segal}
Let $k \geq 1$ and $X\in \sset$.
If $X\in\sset$ is $(2k{+}1)$-coskeletal 
and is lower $(2k{-}1)$-critical, 
then $X$ is lower $(2k{-}1)$-Segal.
\end{theorem}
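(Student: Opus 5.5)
We argue by induction on the simplicial dimension $n$, proving that $X$ satisfies $\lowod^k_n$ for every $n$. For $n < 2k$ the condition is vacuous, and for $n = 2k, 2k+1$ it is exactly the hypothesis that $X$ is lower $(2k{-}1)$-critical. So fix $n \geq 2k+2$ and assume $\lowod^k_{n-1}$. By the Wiggle Lemma (\cref{wiggling}) it suffices to show that $X\cube{I}$ is cartesian for a single gapped set $I \subset [n]$ of size $k+1$. Concretely: given a compatible collection $\{f_i\}_{i\in I}$, we must produce a unique $F \in X_n$ with $e_i F = f_i$ for $i \in I$.

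Since $n \geq 2k+2 > 2k+1$, $X$ is $(2k{+}1)$-coskeletal in dimension $n$. An $n$-simplex is therefore the same thing as a compatible collection $\{f_s\}_{s\in[n]}$ of $(n-1)$-simplices, via unique sphere filling. The plan is to extend the given partial collection $\{f_i\}_{i\in I}$ uniquely to a full compatible sphere $\{f_s\}_{s\in S}$, one index at a time. Take $I = \{0,2,\dots,2k\}$, and order the remaining indices $t \notin I$, say increasingly.

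For each new index $t$, we need $f_t \in X(S_t)$ compatible with all previously chosen $f_i$. The set $I \subset S_t$ is gapped of size $k+1$ in a set of cardinality $n$, except when $t$ lies between two elements of $I$, where adjacency can change. We will instead choose a gapped set $I_t \subset S_t$ drawn from the indices already constructed. The faces $e_t f_i$ for $i \in I_t$ form a compatible collection in $X(S_{t,i})$, by \cref{triv lemma} together with the compatibility of the $f_i$. So $\lowod^k_{n-1}$ gives a unique $f_t \in X(S_t)$ with $e_i f_t = e_t f_i$ for $i \in I_t$.

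The main obstacle is showing that $f_t$ is compatible with \emph{all} previously constructed $f_j$, not just those indexed by $I_t$. For $j \notin I_t$, we compare the $(n-2)$-simplices $e_j f_t$ and $e_t f_j$ in $X(S_{t,j})$. Both have the same faces $e_i$ for $i$ in a gapped subset of $S_{t,j}$: for $e_j f_t$ this follows from the construction, and for $e_t f_j$ it follows from \cref{triv lemma}. By $\lowod^k_{n-2}$, which holds by induction (or for $n-2 = 2k$ by hypothesis), the two simplices agree. For this to work we need the gapped set to remain gapped after deleting $j$, which may require choosing $I_t$ carefully or wiggling as in the proof of \cref{prop lower odd implies coskel}. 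Independence of choices can be argued as there, using \cref{rmk extra from proof}, and this is where $n-1 > 2k$ is needed.

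Once the full compatible sphere is built, coskeletality produces $F$ with $e_s F = f_s$ for all $s$, which gives existence. For uniqueness, suppose $F'$ satisfies $e_i F' = f_i$ for $i \in I$. Then each $e_t F'$ satisfies the defining property of $f_t$, so by the uniqueness in each step $e_t F' = f_t$ for all $t$. Coskeletal uniqueness then gives $F' = F$.
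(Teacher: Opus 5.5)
Your outer framework matches the paper's. You induct on $n$ using $\lowod^k_{n-1}$ and $\lowod^k_{n-2}$, reduce via \cref{wiggling} to the single gapped set $I=\{0,2,\dots,2k\}$, extend $\{f_i\}_{i\in I}$ to a full compatible sphere, fill by coskeletality, and get uniqueness from uniqueness at each stage. But the step you call the main obstacle is essentially the whole content of the proof, and you leave it open (``may require choosing $I_t$ carefully or wiggling''). It is not a formality:
\begin{itemize}
\item With your suggested increasing order the construction fails at the first step. At $t=1$ the only constructed indices are $I$ itself, and $0,2$ are adjacent in $S_1$, so no gapped $(k{+}1)$-subset of $S_1$ is available.
\item Your compatibility check uses a single $I_t$ that must stay gapped in $S_{t,s}$ for every earlier $s\notin I_t$. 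That cannot succeed in general. At the last step with $n=2k+2$, every index other than $t$ has been constructed. So each of the $k$ internal gaps of $I_t$ must contain at least two indices other than $t$; otherwise deleting $t$ and one such $s$ makes two elements of $I_t$ adjacent. That needs $2k$ indices outside $I_t\cup\{t\}$, but there are only $k+1$, so it is impossible once $k\geq 2$.
\item \cref{rmk extra from proof} only applies when the whole boundary $\{e_tf_s\}_{s\in S_t}$ is already available. So it can at best handle the final index, not the intermediate ones.
\end{itemize}

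What is missing is the paper's specific bookkeeping.
\begin{enumerate}
\item First add the top indices $M=[2k+2,n]$, using $I$ itself, which remains gapped in $S_m$ and in $S_{m,m'}$ (\cref{lem end expansion}).
\item For odd $j\leq 2k+1$, use $2k+2$ as a spare index. The set $L=I_{j-1}\cup\{2k+2\}$ is gapped in $S_j$, in $S_{j-1,j}$, and in $S_{j,p}$ for $p>2k+2$. This yields $f_j$ compatible with all of $I\cup M$.
\item Compatibility among the odd faces does not come from any single gapped set. It comes from an inner induction on $(j'-j)/2$, using the gapped set $L^{j,j'}\subset S_{j,j'}$ made of the even elements of $[0,j-1]\cup[j'+1,2k+2]$ and the odd elements of $[j+2,j'-2]$. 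Agreement at the odd members of $L^{j,j'}$ comes from the inductive hypothesis together with \cref{triv lemma} (\cref{lem mid expansion}).
\end{enumerate}
Without some explicit scheme of this kind, or a fully worked-out wiggling argument for partial collections, your proposal does not establish that the full compatible sphere exists.
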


The proof is by induction: we will show that if a $(2k{+}1)$-coskeletal simplicial set $X$ satisfies $\lowod_{n-1}^k$ and $\lowod_{n-2}^k$ (for $n\geq 2k+2$), then it satisfies $\lowod_n^k$.
Assuming these conditions on $X \in \sset$ for the next few results, we also let $I= \{0, 2, \dots, 2k\} \subset S = [n]$ and we fix a compatible collection $\{f_i \in X(S_i)\}_{i\in I}$.

\begin{lemma}\label{lem end expansion}
Let $M = [2k+2, n] \subset S$.
\begin{enumerate}
\item For each $m\in M$, there exists a unique $f_m \in X(S_m)$ which is compatible with the collection $\{f_i\}_{i\in I}$.
\item The collection $\{f_m\}_{m\in M}$ is compatible.
\end{enumerate}
It follows that $\{f_i\}_{i \in I \cup M}$ is a compatible collection.
\end{lemma}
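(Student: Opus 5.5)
The plan is to get both parts from the uniqueness of fillers supplied by the inductive hypotheses $\lowod^k_{n-1}$ and $\lowod^k_{n-2}$, applied to the same gapped set $I=\{0,2,\dots,2k\}$ viewed inside $S_m$ and $S_{m,m'}$. Coskeletality should not be needed for this lemma.

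First I would check that $I$ remains a gapped subset of $S_m$ for $m\in M$, and of $S_{m,m'}$ for $m\neq m'\in M$.
\begin{itemize}
\item Every element of $I$ is at most $2k$, and every removed element is at least $2k+2$.
\item Hence $[0,2k+1]$ is an initial segment of both $S_m$ and $S_{m,m'}$.
\item The adjacency relations among elements of $[0,2k+1]$ are therefore unchanged. In particular $2k$ is still followed by $2k+1\notin I$, so $I$ contains no adjacent pair.
\item Properness holds because $|S_{m,m'}| = n-1 \geq 2k+1 > k+1$.
\end{itemize}
Since $S_m\cong[n-1]$ and $S_{m,m'}\cong[n-2]$, the conditions $\lowod^k_{n-1}$ and $\lowod^k_{n-2}$ apply. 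In the form of the remark following \cref{def higher Segal}, they say that a compatible family indexed by $I$ has a unique common extension.

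For part (1), fix $m\in M$ and consider the family $\{e_m f_i\}_{i\in I}$ with $e_m f_i\in X(S_{m,i})$. This family is compatible as an $I$-indexed family for $S_m$: for $i\neq j\in I$,
\[ e_j e_m f_i = e_m e_j f_i = e_m e_i f_j = e_i e_m f_j, \]
using the interchange property and the compatibility of $\{f_i\}_{i \in I}$. By $\lowod^k_{n-1}$ applied to $I\subset S_m$, there is a unique $f_m\in X(S_m)$ with $e_i f_m = e_m f_i$ for all $i\in I$. This condition is exactly the statement that $f_m$ is compatible with $\{f_i\}_{i\in I}$, which gives both existence and uniqueness.

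For part (2), take $m\neq m'$ in $M$; we must show $e_m f_{m'} = e_{m'} f_m$ in $X(S_{m,m'})$. By $\lowod^k_{n-2}$ applied to $I\subset S_{m,m'}$, an element of $X(S_{m,m'})$ is determined by its faces $e_i$ for $i\in I$. It therefore suffices to check that $e_i e_m f_{m'} = e_i e_{m'} f_m$ for each $i\in I$. This is \cref{triv lemma} with $(p,q,t)=(m',i,m)$ (and symmetrically): both sides equal $e_m e_{m'} f_i$, using $e_i f_m = e_m f_i$ and $e_i f_{m'} = e_{m'} f_i$ from part (1).

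The final sentence of the lemma then follows. The pairs inside $I$ are compatible by hypothesis, the pairs between $I$ and $M$ by part (1), and the pairs inside $M$ by part (2).

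I do not expect a serious obstacle. The only point needing care is the gappedness bookkeeping in the first step, where one must make sure that deleting elements of $M$ does not create new adjacencies inside $I$. This is exactly why $M$ starts at $2k+2$ rather than $2k+1$.
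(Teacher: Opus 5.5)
Your proof is correct and follows the paper's argument essentially verbatim: $\lowod^k_{n-1}$ applied to $I\subset S_m$ gives existence and uniqueness of $f_m$, and $\lowod^k_{n-2}$ applied to $I\subset S_{m,m'}$ together with \cref{triv lemma} for $(p,q,t)=(m',i,m)$ gives compatibility. One small inaccuracy is in your closing remark: deleting $2k+1$ would not create an adjacency inside $I$ either, since it only makes $2k$ adjacent to $2k+2\notin I$. So the choice to start $M$ at $2k+2$ is dictated by the next lemma, not by gappedness here: there $2k+1$ is handled with the other odd indices in $J$, and $2k+2\in M$ is used to build the gapped sets $L$.
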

\begin{proof}
Fix $m\in M$ and notice that $I\subset S_m$ is gapped. Since $\lowod_{n-1}^k$ holds, the compatible collection $\{e_m f_i \in X(S_{i,m}) \}_{i\in I}$ gives a unique $f_m \in X(S_m)$ with $e_i f_m = e_m f_i$ for all $i$.

Suppose $m,m' \in M$. We wish to show that $e_{m'} f_m = e_m f_{m'}$.
But $I\subset S_{m,m'}$ is gapped, so by $\lowod_{n-2}^k$ it suffices to show $e_i e_{m'} f_m = e_i e_m f_{m'}$ for each $i\in I$.
For this, apply \cref{triv lemma} with $t=m$, $p=m'$, and $q=i$.
\end{proof}

\begin{lemma}\label{lem mid expansion}
Let $\{f_i\}_{i \in I \cup M}$ be as in \cref{lem end expansion}, and $J = \{1, 3, \dots, 2k+1\}$.
\begin{enumerate}
\item For each $j\in J$, there exists a unique $f_j \in X(S_j)$ which is compatible with the collection $\{f_i\}_{i \in I \cup M}$. \label{mid expansion one}
\item The collection $\{f_j \}_{j\in J}$ is compatible.\label{mid expansion two}
\end{enumerate}
It follows that $\{f_i\}_{i \in S}$ is a compatible collection. 
\end{lemma}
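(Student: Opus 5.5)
The plan is to build each $f_j$ from a gapped subset of $S_j$ made of indices where faces are already known, using $\lowod^k_{n-1}$. Then compatibility is checked one face at a time, using $\lowod^k_{n-2}$ together with \cref{triv lemma}; coskeletality is used only where no suitable gapped set exists.

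\emph{Part (\ref{mid expansion one}).} Fix $j=2l+1\in J$. I would take
\[
G_j=\{0,2,\dots,2l\}\cup\{2l+4,2l+6,\dots,2k+2\}\subset I\cup M,
\]
which has $k+1$ elements; when $l=k$ this is just $I$. In $S_j$ the only new adjacency is between $2l$ and $2l+2$. Since $2l+2\notin G_j$, and $2k+1\in S_j$ whenever $l<k$, the set $G_j$ is gapped in $S_j$. The collection $\{e_jf_g\}_{g\in G_j}$ is compatible, because $\{f_i\}_{i\in I\cup M}$ is. So $\lowod^k_{n-1}$ gives a unique $f_j\in X(S_j)$ with $e_gf_j=e_jf_g$ for $g\in G_j$. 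This already proves uniqueness in (\ref{mid expansion one}).

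For existence, take $h\in(I\cup M)\setminus G_j$. Then $h=2l+2$ or $h\geq 2k+3$, and I must show $e_hf_j=e_jf_h$ in $X(S_{h,j})$. Removing such an $h$ from $S_j$ creates no adjacency inside $G_j$: the only one it could create is between $2l$ and $2l+4$, and $2l+3$ still separates them. So $G_j\subset S_{h,j}$ is gapped, and by $\lowod^k_{n-2}$ it suffices to check $e_ge_hf_j=e_ge_jf_h$ for $g\in G_j$. This is \cref{triv lemma} with $q=g$, $t=j$, $p=h$.

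\emph{Part (\ref{mid expansion two}).} This is the main obstacle. For $j\neq j'$ in $J$ we need $e_{j'}f_j=e_jf_{j'}$ in $X(S_{j,j'})$. The difficulty is that removing two odd indices creates two adjacencies among the evens, so $I\cup M$ need not contain a gapped subset of $S_{j,j'}$ of size $k+1$. For example, when $n=2k+2$ at most $k$ such indices survive.

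My first attempt is to look for a gapped $G\subset S_{j,j'}$ drawn from $I\cup M$, using the large end segment $M$ to compensate. Where that works, I would conclude exactly as in part (\ref{mid expansion one}): both $f_j$ and $f_{j'}$ are compatible with every $f_g$, $g\in I\cup M$, so \cref{triv lemma} gives $e_ge_{j'}f_j=e_ge_jf_{j'}$ for all such $g$.

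Where that fails, I would use the $(2k{+}1)$-coskeletality of $X$ instead. Two simplices of dimension $n-2\geq 2k+2$ are equal as soon as all their codimension-one faces agree. The faces indexed by $I\cup M$ agree by \cref{triv lemma}. The faces indexed by a third odd $j''$ reduce to a three-way symmetric identity among $e_{j''}e_{j'}f_j$ and its permutations, one dimension lower. I would handle this by a downward induction on dimension via coskeletality, until the ambient set is small enough that the evens together with $M$ supply a gapped set. The residual low cases, where $n-2\leq 2k+1$ (i.e.\ $n=2k+2,2k+3$), I expect to need a direct argument. There I would try writing $e_{j'}f_j$ and $e_jf_{j'}$ as the unique fillers from $\lowod^k_{n-2}$ of a gapped set that mixes evens with one odd index. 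Their common value on that odd face would come from part (\ref{mid expansion one}) applied to the other index.

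Finally, compatibility of the whole collection $\{f_i\}_{i\in S}$ follows by combining \cref{lem end expansion}, part (\ref{mid expansion one}) and part (\ref{mid expansion two}).
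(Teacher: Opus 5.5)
Your part (1) is correct and is essentially the paper's argument: you delete $j+1$ from $I\cup\{2k+2\}$ where the paper deletes $j-1$. Part (2), however, has a genuine gap, and it sits exactly where the difficulty lies.

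Your coskeletality branch never does any work. $X(S_{j,j'})$ has dimension $n-2$, so $(2k{+}1)$-coskeletality is usable only for $n\geq 2k+4$. In that range your first attempt already succeeds, because $M$ then contains two non-adjacent elements (e.g.\ $2k+2$ and $2k+4$), which make up for the evens lost to the new adjacencies. The proposed downward induction also runs the wrong way: deleting further odd indices only creates more adjacencies among elements of $I\cup M$, and coskeletality is lost below dimension $2k+2$.

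So everything rests on $n\in\{2k+2,2k+3\}$ with $j'-j\geq 4$. This includes the base case of the induction in \cref{thm lowod case coskel to segal}, and there you offer only a sketch that cannot work as described. Take $n=2k+2$. Then $S_{j,j'}$ has $2k+1$ elements, so its only gapped $(k{+}1)$-subset is $L^{j,j'}$: the even elements of $[0,j-1]\cup[j'+1,2k+2]$ together with all odd $\ell\in[j+2,j'-2]$. It contains a single odd index only when $j'-j=4$. Moreover, for such an odd $\ell$, part (1) gives no information about $e_\ell f_j$ or $e_\ell f_{j'}$. And $X(S_{j,j',\ell})$ has dimension $2k-1$, where there are no gapped $(k{+}1)$-subsets and coskeletality says nothing. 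So the only route to $e_\ell e_{j'}f_j=e_\ell e_jf_{j'}$ is through $e_\ell f_j=e_jf_\ell$ and $e_\ell f_{j'}=e_{j'}f_\ell$, combined via \cref{triv lemma}.

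The missing idea is that these two equations are themselves instances of (2), for pairs that are closer together. The paper inducts on $N=(j'-j)/2$, using the set $L^{j,j'}\subset S_{j,j'}$, which is gapped for every $n$:
\begin{itemize}
\item its even elements lie in $I\cup M$ and are handled by part (1) and \cref{triv lemma};
\item its odd elements lie strictly between $j$ and $j'$, so the inductive hypothesis gives their compatibility with $f_j$ and $f_{j'}$, and \cref{triv lemma} applies again.
\end{itemize}
Then $\lowod^k_{n-2}$ gives $e_jf_{j'}=e_{j'}f_j$. The base case $N=1$ uses only even indices, and it is the one case where your first attempt works for all $n$. This argument is uniform in $n$ and uses no coskeletality at all; in the paper, coskeletality enters only afterwards, for uniqueness in \cref{prop lowod unique fill}.
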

\begin{proof}
We start with \eqref{mid expansion one}. 
Fix $j\in J$, and let $L \coloneq I_{j-1} \cup \{2k+2\} \subset I \cup M$.
We will show that $L \subset S_{j-1,j}$ is gapped, as is $L \subset S_{j, p}$ for $p > 2k+2$.
For the first statement: 
\begin{itemize}[left=0pt]
\item if $j =1$ then $L = \{2, 4, \dots, 2k+2 \} \subset \{2, 3, 4, \cdots, n \} = S_{0,1}$ is gapped.
\item if $j \geq 3$, then $j-2 \in S_{j-1,j}$ is between $j-3, j+1 \in L$; if $i < i'$ is another adjacent pair in $L$, then $i' \leq j-3$ or $i \geq j+1$, so any $s \in S$ with $i < s < i'$ is still in $S_{j-1,j}$.
\end{itemize}
As $L$ is gapped in $S_{j-1,j}$, it is also gapped in $S_j$; since $p > 2k+2$ is larger than all elements in $L$, this set is also gapped in $S_{j,p}$.

As $\{f_\ell\}_{\ell \in L}$ is compatible (as the restriction of a compatible collection on $I\cup M$), the collection $\{e_j f_\ell \}_{\ell \in L}$ is also compatible.
By $\lowod_{n-1}^k$, there is a unique $f_j \in X(S_j)$ with $e_\ell f_j = e_j f_\ell$ for all $\ell \in L$, i.e.\ there is a unique $f_j$ which is compatible with $\{f_\ell\}_{\ell \in L}$.
If $m\in (I \cup M) \setminus L = \{j-1\} \cup [2k+3,n]$ and $\ell \in L$, then by \cref{triv lemma} (with  $p=m, q=\ell$, and $t=j$) we have $e_\ell e_m f_j = e_\ell e_j f_m$.
We saw in the previous paragraph that $L$ is gapped in $S_{m,j}$, so since $\lowod_{n-2}^k$ holds, we have $e_m f_j = e_j f_m$.
Thus $f_j$ is compatible with $\{f_i \}_{i\in I \cup M}$, establishing \eqref{mid expansion one}.

For \eqref{mid expansion two}, we will show that $e_j f_{j'} = e_{j'} f_j$ for $j < j'$ in $J = \{1, 3, \dots, 2k+1\}$.
Let $L=L^{j,j'}$ be the set of even elements in $[0,j-1] \cup [j'+1,2k+2]$ along with the odd elements in $[j+2,j'-2]$.
The set $L$ has $k+1$ elements and is gapped in $S_{j,j'}$; this is because the inclusion $L\subset S_{j,j'}$ is isomorphic to the inclusion into $[0,n-2]$ of the set of even elements in $[0,2k]$ .
% Generically locally this looks like (j-1), {j+1}, (j+2) ... (j'-2), {j'-1}, (j'+1) 
% or if N = 1 this looks locally like (j-1), {j+1}, (j+3)

To prove $e_j f_{j'} = e_{j'} f_j$, we induct on $N = (j'-j)/2 \geq 1$.
By $\lowod_{n-2}^k$ it suffices to show $e_\ell e_j f_{j'} = e_\ell e_{j'} f_j$ for all $\ell \in L^{j,j'}$.
Notice that the even elements of $L^{j,j'}$ are contained in $I\cup M$, so this equation holds in that case by \cref{triv lemma} (with $q=\ell$, $p=j$, and $t=j'$).
This completely takes care of the $N=1$ case, as $L=L^{j,j+2}$ is the set of even elements in $[0,j-1] \cup [j+3, 2k+2]$.
For  $N > 1$, we must consider the case of an odd element $\ell$ in $[j+2,j'-2]$.
The inductive step gives $e_\ell f_j = e_j f_\ell$ and $e_\ell f_{j'} = e_{j'} f_\ell$. 
By \cref{triv lemma} (with $q=\ell$, $t=j'$, and $p=j$), we have $e_\ell e_j f_{j'} = e_\ell e_{j'} f_j$. 
We conclude that $e_j f_{j'} = e_{j'} f_j$ for $j,j' \in J$.
\end{proof}

\begin{proposition}\label{prop lowod unique fill}
Under the standing assumptions above, there exists a unique $F\in X_n$ such that $e_i F = f_i$ for all $i\in I$.
\end{proposition}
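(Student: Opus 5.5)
Existence is the substantive part; uniqueness comes almost for free. Start from the given compatible collection $\{f_i\}_{i\in I}$ with $I=\{0,2,\dots,2k\}\subset S=[n]$. Apply \cref{lem end expansion} to extend it to a compatible collection $\{f_i\}_{i\in I\cup M}$. Then apply \cref{lem mid expansion} to extend further to a compatible collection $\{f_i\}_{i\in S}$ indexed by all of $S=[n]$. This is precisely an $n$-sphere in $X$ in the sense of \cref{def n-coskel}. Since $n\geq 2k+2>2k+1$ and $X$ is $(2k{+}1)$-coskeletal, there is a unique $F\in X_n$ with $e_iF=f_i$ for all $i\in S$. In particular $e_iF=f_i$ for $i\in I$, which gives existence.

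For uniqueness, suppose $F'\in X_n$ also satisfies $e_iF'=f_i$ for all $i\in I$. I would show that $e_sF'=f_s$ for every $s\in S$; coskeletal uniqueness then forces $F'=F$. The key point is that the extensions in \cref{lem end expansion} and \cref{lem mid expansion} were characterized by uniqueness, not merely constructed.

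Take $m\in M$ first. The face $e_mF'$ satisfies $e_i e_m F' = e_m e_i F' = e_m f_i$ for $i\in I$, so it is compatible with $\{f_i\}_{i\in I}$. By the uniqueness clause of \cref{lem end expansion}, $e_mF'=f_m$.

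Next take $j\in J$. By the same interchange computation, $e_jF'$ is compatible with $\{f_i\}_{i\in I\cup M}$, since we now know $e_iF'=f_i$ on $I\cup M$. The uniqueness clause of \cref{lem mid expansion}\eqref{mid expansion one} then gives $e_jF'=f_j$.

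Hence $F'$ fills the same $n$-sphere as $F$, and so $F'=F$.

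The only step needing care is checking that the uniqueness statements in the two lemmas really apply to the faces of an arbitrary candidate $F'$. This amounts to the face-interchange identity $e_ie_s=e_se_i$, so I expect no genuine obstacle: the hard combinatorial work, namely gappedness and the induction on $(j'-j)/2$, has already been done in \cref{lem mid expansion}.
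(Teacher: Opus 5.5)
Your proposal is correct and follows essentially the same route as the paper. Existence comes from the two expansion lemmas plus $(2k{+}1)$-coskeletality. Uniqueness comes from applying the uniqueness clauses of those lemmas to the faces $e_mF'$ and $e_jF'$, via the interchange $e_ie_s=e_se_i$, and then invoking coskeletality again.
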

\begin{proof}
Existence is established by the previous lemmas. 
But so is uniqueness: suppose $F, F' \in X_n$ are such that $e_i F = e_i F' = f_i$ for all $i\in I$.
By uniqueness of $f_m$ and of $f_j$ in the previous two lemmas, we have $e_m F = e_m F' = f_m$ for $m\in M$ and $e_j F = e_j F' = f_j$ for $j\in J$.
Since $X$ is $(2k{+}1)$-coskeletal, $F = F'$. So such an $F$, if it exists, is unique.
\end{proof}

\begin{proof}[Proof of \cref{thm lowod case coskel to segal}]
Suppose $X$ is $(2k{+}1)$-coskeletal and satisfies $\lowod_{n-1}^k$ and $\lowod_{n-2}^k$ (for $n\geq 2k+2$).
\Cref{prop lowod unique fill} and \cref{wiggling} imply that $X$ satisfies $\lowod_n^k$. By induction, $X$ is lower $(2k{-}1)$-Segal.
\end{proof}

\section{Coskeletality and d\'ecalage}
We now pursue analogues of \cref{prop lower odd implies coskel} and \cref{thm lowod case coskel to segal} for the other higher Segal conditions (\cref{hsc implies coskel} and \cref{thm other cases coskel to segal}), leading to \cref{thm main theorem}.
It is possible to simply adjust the proofs in \cref{sec lower odd} to cover the additional three cases.
Instead, we combine the path space criterion with a close relationship between the coskeletality of $X$ and its d\'ecalages (\cref{prop coskel to dec coskel} and \cref{lem dec coskel to coskel}).
Before turning to this, we take care of an exceptional case by hand.

\begin{lemma}\label{lem k=0 case}
If $X$ is upper or lower $0$-critical and $2$-coskeletal, or if $X$ is upper $1$-critical and $3$-coskeletal, then $X$ is constant.
% Consequently, $X$ is $0$-Segal and $\dectop X \cong \decbot X$ is $1$-coskeletal.
\end{lemma}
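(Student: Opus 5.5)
The plan is to turn each criticality hypothesis into the statement that certain face maps are bijections in low degrees, then use coskeletality to spread this to all degrees. When $k=0$ a gapped set is a singleton $\{i\}$, and the cube $X\cube{\{i\}}$ is the single map $e_i\colon X(S)\to X(S_i)$. So ``cartesian'' just means that this face map is a bijection.

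\emph{Lower $0$-critical case.} Here $\lowev^0_1$ says $d_1\colon X_1\to X_0$ is bijective, and $\lowev^0_2$ says $d_1,d_2\colon X_2\to X_1$ are bijective.
\begin{itemize}
\item Since $d_1s_0=\mathrm{id}$, the degeneracy $s_0$ is the inverse of $d_1$ on $X_1$. Then $d_0=d_0s_0d_1=d_1$, so both face maps $X_1\to X_0$ are the same bijection.
\item In degree $2$, both $s_0$ and $s_1$ are sections of $d_1$, so both equal $d_1^{-1}$. Then $d_0=d_0s_0d_1=d_1$ and $d_2=d_2s_1d_1=d_1$.
\end{itemize}
Thus the $2$-truncation of $X$ is the $2$-truncation of the constant simplicial set on $A=X_0$. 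Next, a constant simplicial set is $n$-coskeletal for every $n\ge1$: for $m\ge2$, compatibility of an $m$-sphere in a constant object forces all $f_i$ to be equal, and then there is exactly one filler. A $2$-coskeletal simplicial set is determined up to isomorphism by its $2$-truncation (it is the right Kan extension of that truncation). Hence $X$ is constant. The upper $0$-critical case follows by duality, since $X^\op$ is lower $0$-critical exactly when $X$ is upper $0$-critical, and $X^\op$ is again $2$-coskeletal.

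\emph{Upper $1$-critical case.} Here $\uppod^0_2$ and $\uppod^0_3$ say that $d_1\colon X_2\to X_1$ and $d_1,d_2\colon X_3\to X_2$ are bijections. The same section argument shows:
\begin{itemize}
\item all degeneracies $X_1\to X_2$ equal $d_1^{-1}$;
\item all face maps $X_2\to X_1$ coincide;
\item likewise all face maps $X_3\to X_2$ coincide and are bijections.
\end{itemize}
The identity $d_0d_2=d_1d_0$ on $X_2$, together with surjectivity of the common face map $X_2\to X_1$, gives $d_0=d_1$ on $X_1$. So the $3$-truncation is constant from degree $1$ upward, and $3$-coskeletality propagates this to all degrees $\ge1$.

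The main obstacle is the bottom degree: showing that $d_0=d_1\colon X_1\to X_0$ is a bijection. Nothing in $\uppod^0_2$ or $\uppod^0_3$ refers to $X_0$. I first tried to obtain this from the simplicial identities and $3$-coskeletality, but this seems to fail. Take $X_0=\ast$, $X_n=A$ for $n\ge1$ with all faces and degeneracies among positive degrees the identity, and $s_0(\ast)=a$. This appears to satisfy all the hypotheses, and for $|A|>1$ it is not constant. So this step needs either an extra argument I am missing, or a reading of ``constant'' (or of the conditions) that only concerns degrees $\ge1$; the latter would agree with $\dectop X\cong\decbot X$ being constant. I would check this point before writing the proof.
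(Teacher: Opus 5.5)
\textbf{There is a genuine gap in the upper $1$-critical case.} Your treatment of the upper and lower $0$-critical cases is fine and matches the paper's argument: bijective face maps have the degeneracies as two-sided inverses, so the truncation is constant, and coskeletality plus the right Kan extension finishes. In the upper $1$-critical case, however, you never show that $d_0=d_1\colon X_1\to X_0$ is a bijection.

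Your proposed counterexample is not a simplicial set. It violates the mixed identity $d_2 s_0 = s_0 d_1\colon X_1\to X_1$. On the left, $s_0\colon X_1\to X_2$ and $d_2\colon X_2\to X_1$; on the right, $d_1\colon X_1\to X_0$ and $s_0\colon X_0\to X_1$. In your example the left side is $\mathrm{id}_A$, while the right side is the constant map at $a$. The identity $d_0s_1=s_0d_0$ fails in the same way. So the hypotheses do constrain $X_0$, through the degeneracy $s_0\colon X_0\to X_1$. There is no need to weaken the statement to degrees $\ge 1$.

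This identity is exactly the missing step. You already showed that $s_0=d_1^{-1}\colon X_1\to X_2$ and $d_2=d_1\colon X_2\to X_1$. Hence
\[
s_0 d_1 \;=\; d_2 s_0 \;=\; d_1 s_0 \;=\; \mathrm{id}_{X_1},
\]
where the leftmost composite is $X_1\to X_0\to X_1$ and the others pass through $X_2$. Together with $d_1s_0=\mathrm{id}_{X_0}$, this shows $d_1\colon X_1\to X_0$ is a bijection with inverse $s_0$. Then $d_0s_0=\mathrm{id}_{X_0}$ gives $d_0=d_1$.

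Now all face maps between degrees $\le 3$ are equal bijections, with the degeneracies as their inverses. So $X|_{\Delta_{\le 3}}$ is isomorphic to the constant truncated object on $X_0$. You finish exactly as in your $0$-critical case: constant simplicial sets are $1$-coskeletal, hence $3$-coskeletal, and a $3$-coskeletal $X$ is the right Kan extension of its $3$-truncation. This is precisely the argument the paper uses.
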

\begin{proof}
We assume $X$ is upper 1-critical and 3-coskeletal, with the other cases being similar.
Upper $1$-critical means $d_1 \colon X_2 \to X_1$ and $d_1,d_2 \colon X_3 \to X_2$ are bijections.
From the simplicial identities one then checks that all parallel face/degeneracy maps $X_n \to X_{n \pm 1}$ for $n, n\pm 1 \in \{1,2,3\}$ are equal bijections.
Then $\operatorname{id}_{X_1} = d_1s_0 = d_2 s_0 = s_0 d_1$.
This is enough to infer that $d_0 = d_1 \colon X_1 \cong X_0$ with inverse $s_0$.
Thus $X|_{\Delta_{\leq 3}}$ is isomorphic to the constant presheaf on $X_0$, so if $X$ is $3$-coskeletal it is isomorphic to the constant simplicial set on $X_0$.
\end{proof}

\begin{proposition}\label{prop coskel to dec coskel}
Let $n\geq 0$.
If $X$ is $n$-coskeletal, then so are its d\'ecalages $\dectop X$ and $\decbot X$.
Further, still assuming $X$ is $n$-coskeletal ($k$ a nonnegative integer):
\begin{enumerate}
\item If $n=2k+2$ and $X$ is lower $2k$-critical, 
then $\decbot X$ is $(n{-}1)$-coskeletal.
\item If $n=2k+2$ and $X$ is upper $2k$-critical, 
then $\dectop X$ is $(n{-}1)$-coskeletal.\label{item upev}
\item If $n=2k+3$ and $X$ is upper $(2k{+}1)$-critical,
then $\dectop X$ and $\decbot X$ are $(n{-}1)$-coskeletal. \label{item upod}
\end{enumerate}
\end{proposition}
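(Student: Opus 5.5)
The plan has two parts. For the first statement, reduce a sphere in a décalage to a sphere in $X$ one dimension higher. For the numbered items, replace the coskeletality input used in that reduction by the Segal-type filling available at the critical dimensions. Throughout I treat $\dectop$ and obtain $\decbot$ by duality: $\dectop(X^\op) = (\decbot X)^\op$, and $X \mapsto X^\op$ preserves $n$-coskeletality and exchanges the relevant critical conditions. So item (2) follows from item (1) applied to $X^\op$.

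\emph{First statement.} Let $S=[m]$ with $m>n$, and let $\{g_i\}_{i\in S}$ be a compatible collection in $\dectop X$, so $g_i\in X(S_i^\triangleright)$. Put $t=m+1=\max S^\triangleright$.
\begin{itemize}
\item The elements $e_t g_i \in X(S_i)$ form a compatible collection in $X$ indexed by $S$, since the maps $e_\bullet$ commute.
\item Because $m>n$, this collection has a unique filler $h\in X(S)$.
\item Then $\{g_i\}_{i\in S}\cup\{h\}$ is a compatible collection indexed by $S^\triangleright$: the new conditions $e_i h = e_t g_i$ hold by construction. It is an $(m{+}1)$-sphere with $m+1>n$, so it has a unique filler $G\in X(S^\triangleright)=(\dectop X)(S)$.
\item For uniqueness, any $G'$ with $e_iG'=g_i$ for $i\in S$ has $e_tG'$ filling the sphere $\{e_tg_i\}$. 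Hence $e_tG'=h$, and so $G'=G$.
\end{itemize}

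\emph{Item (1).} Here $n=2k+2$, and I must fill $(2k{+}2)$-spheres in $\decbot X$; higher spheres are already handled by the first statement. Take $S=[0,2k+2]$ and $g_i\in X(S_i^\triangleleft)$. As before, I need a unique $h\in X(S)$ with $e_i h = e_{-1}g_i$ for all $i\in S$, after which coskeletality of $X$ in dimension $2k+3$ finishes the argument. The difficulty is that $X$ is only $(2k{+}2)$-coskeletal, so $h$ does not come for free.

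To produce $h$, view $X(S)=(\decbot X)([1,2k+2])$. By \cref{easy psc}, $\decbot X$ satisfies $\lowod^k_{2k}$ and $\lowod^k_{2k+1}$. The faces $e_{-1}g_i$ for $1\le i\le 2k+2$ form a $(2k{+}1)$-sphere in $\decbot X$. \Cref{rmk extra from proof} with $n=2k+1$ then gives a unique $h$ matching these faces.

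It remains to check the extra face $e_0h=e_{-1}g_0$ in $X([1,2k+2])$. Both sides agree after applying $e_i$ for each $i\in\{2,4,\dots,2k+2\}$, by compatibility of the sphere. This set is gapped in $[1,2k+2]$ and avoids its minimum. Hence $\lowev^k_{2k+1}$, in its uniqueness form, forces equality.

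\emph{Item (3).} Here $n=2k+3$ and $X$ is upper $(2k{+}1)$-critical. Consider a $(2k{+}3)$-sphere in $\dectop X$ (the case of $\decbot X$ is dual). The needed $h$ lives in $X([0,2k+3])=(\decbot\dectop X)([1,2k+2])$. By \cref{easy psc}(3), $\decbot\dectop X$ satisfies $\lowod^k_{2k}$ and $\lowod^k_{2k+1}$. So \cref{rmk extra from proof} again fills the faces indexed by $1,\dots,2k+2$.

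The two outer faces, $e_0h$ and $e_{2k+3}h$, are then pinned down by uniqueness. Compare each with the given data on a gapped set of size $k+1$ avoiding both endpoints of the $(2k{+}2)$-dimensional face, for example every other interior element. This uses $\uppod^k_{2k+2}$.

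I expect the main obstacle to be this bookkeeping. One must identify each face of $X$ with a face of the iterated décalage, and verify that the chosen gapped sets are proper, have $k+1$ elements, and avoid the required endpoints, so that exactly the two critical dimensions are used.
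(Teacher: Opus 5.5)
For $k\geq 1$ your argument is the paper's. The paper proves item (2) directly and gets (1) by duality; you do the mirror image. In item (3) your gapped sets $\{2,4,\dots,2k+2\}$ and $\{1,3,\dots,2k+1\}$ are exactly the ones the paper uses.

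The gap is the case $k=0$, which the statement allows and which the paper treats separately. There you apply \cref{rmk extra from proof} with $k=0$ and $n=1$. That remark, like \cref{prop lower odd implies coskel} from which it comes, is only available for $k\geq 1$, and it is false for $k=0$. The compatibility condition for a pair $f_0,f_1$ of $0$-simplices would live over $S_{0,1}=\emptyset$, so it is vacuous. The remark would then assert that $(d_0,d_1)\colon Y_1\to Y_0\times Y_0$ is bijective. This fails for a constant simplicial set $Y$ with two points, even though such a $Y$ satisfies $\lowod^0_0$ and $\lowod^0_1$.

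The underlying issue is that $\lowod^0_0$ is vacuous. Translating via \cref{easy psc}, which is stated only for $k\geq 1$, therefore discards $\lowev^0_1$ (resp.\ $\uppod^0_2$). That is precisely the condition needed to control the second inner face of $h$. You can repair this in either of two ways:
\begin{itemize}
\item Argue as the paper does: \cref{lem k=0 case} shows $X$ is constant, so its d\'ecalages are constant and hence $1$-coskeletal.
\item Argue directly. In item (1), define $h$ by $e_1h=e_{-1}g_1$, using that $e_1\colon X([0,2])\to X(\{0,2\})$ is bijective by $\lowev^0_2$. Then $e_2h=e_{-1}g_2$, because both sides have the same image under $e_1\colon X(\{0,1\})\to X(\{0\})$, which is injective by $\lowev^0_1$. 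Item (3) is the same, using $\uppod^0_3$ and $\uppod^0_2$.
\end{itemize}

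A smaller low-dimensional slip occurs in the first statement, and the paper's proof shares it. When $m=1$, compatibility of $\{g_0,g_1\}$ in $\dectop X$ is vacuous. So the equation $e_0g_1=e_1g_0\in X(\{2\})$, needed for $\{g_0,g_1,h\}$ to be compatible in $X$, is not available. This only affects $n=0$, where the claim is actually false. The upper d\'ecalage of the nerve of the codiscrete groupoid on two objects is two disjoint copies of that nerve, hence not $0$-coskeletal. For $n\geq 1$ your argument is correct, and only $n\geq 2$ is used later.
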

\begin{proof}
We will only consider $\dectop X$ in this proof; the results about $\decbot$ follow by duality.
Fix $p > n-1$ and suppose $f_0, \dots, f_p$ is a compatible collection of $(p-1)$-simplices of $\dectop X$.
More specifically, set $S = [p+1]$, $S' = S_{p+1} = [p]$, then we're taking $f_i \in (\dectop X)(S'_i) = X(S_i) \cong X_{p}$ such that $e_i f_j = e_j f_i$ for $0 \leq i < j \leq p$.
We must show there exists a unique $F \in (\dectop X)(S') = X(S)$ such that $e_i F = f_i$ for $0\leq i \leq p$.

There is a compatible collection $e_{p+1} f_0, \dots, e_{p+1} f_p$ of $(p-1)$-simplices of $X$. % proof omitted
If $p > n$, then since $X$ is $n$-coskeletal, there exists a unique $f_{p+1} \in X(S') = X(S_{p+1})$ such that $e_i f_{p+1} = e_{p+1} f_i$ for all $0 \leq i \leq p$.
The collection $f_0, \dots, f_p, f_{p+1}$ is a compatible collection of $p$-simplices of $X$, so there exists a unique $F \in X(S)$ such that $e_i F = f_i$ for $i=0, \dots, p+1$.
If there is another $F' \in X(S)$ such that $e_i F' = f_i$ for $i=0, \dots, p$, then also $e_{p+1} F' = f_{p+1}$ by uniqueness of $f_{p+1}$.
Hence $F=F'$ since $X$ is $n$-coskeletal.
This establishes $n$-coskeletality for $\dectop X$.

It remains to address the $p=n$ case when a hypothesis of \eqref{item upev} or \eqref{item upod} holds.
When $k=0$, we have $\dectop X$ is constant by \cref{lem k=0 case} (since $X$ is so), hence $1$-coskeletal. 
We may thus assume $k \geq 1$.
We begin with \eqref{item upev} and assume $p=n=2k+2$ and $X$ satisfies $\uppev^k_{p-1}$ and $\uppev^k_p$. 
Then $\dectop X$ satisfies $\lowod^k_{p-2} = \lowod^k_{2k}$ and $\lowod^k_{p-1} = \lowod^k_{2k+1}$.
By \cref{rmk extra from proof} applied to $\dectop X$, there is a unique $f_{p+1}$ such that $e_i f_{p+1} = e_{p+1} f_i$ for $0 \leq i \leq p-1$.
Why is it the case that $e_p f_{p+1} = e_{p+1} f_p \in X(S_{p,p+1}) \cong X_{p-1}$?
We let $I = \{0, 2, 4, \dots, p-2=2k\} \subset S_{p,p+1}$ be the unique gapped set missing the maximal element $p-1=2k+1$.
For $i\in I$ we have % $e_i f_p = e_p f_i$ and $e_i f_{p+1} = e_{p+1} f_i$, hence 
$e_i e_p f_{p+1} = e_i e_{p+1} f_p$ by \cref{triv lemma}, so $e_p f_{p+1} = e_{p+1} f_p$ by $\uppev_{p-1}^k$. 
Thus $f_0, \dots, f_{p+1}$ is a compatible collection of $p$-simplices of $X$. 
Since $p+1 = n+1 > n$ there exists a unique $F\in X(S)$ such that $e_i F = f_i$ for $i \leq p+1$.
Uniqueness of $F$ satisfying $e_i F = f_i$ for $i \leq p$ is exactly as in the previous paragraph.
Hence $\dectop X$ is $(n{-}1)$-coskeletal.

Finally we come to \eqref{item upod}, which is similar to the preceding paragraph.
Assume $p=n = 2k+3$ and $X$ satisfies $\uppod_{p-1}^k$ and $\uppod_p^k$.
Then $Y = \decbot \dectop X = \dectop\decbot X$ satisfies $\lowod_{p-3}^k$ and $\lowod_{p-2}^k$.
By \cref{rmk extra from proof} applied to $Y$, there is a unique $f_{p+1}$ such that $e_i f_{p+1} = e_{p+1} f_i$ for $1\leq i \leq p-1$.
We wish to show $e_0 f_{p+1} = e_{p+1} f_0$ and $e_p f_{p+1} = e_{p+1} f_p$.
In the first case, let $I = \{2, 4, \dots, 2k+2\} \subset S_{0,p+1} = S_{0,2k+4}$ be the unique gapped set missing the maximal and minimal elements; 
% $e_i f_{p+1} = e_{p+1} e_i$ and $e_i f_0 = e_0 f_i$
by \cref{triv lemma} we have $e_i e_0 f_{p+1} = e_i e_{p+1} f_0$ for each $i\in I$, hence $e_0 f_{p+1} = e_{p+1} f_0$.
Likewise, in the second case, we use the gapped set $I = \{1,3, \dots, 2k+1\} \subset S_{p,p+1} = S_{2k+3,2k+4}$ to see $e_p f_{p+1} = e_{p+1} f_p$.
Thus $f_0, \dots, f_{p+1}$ is a compatible collection of $X$, hence there is a unique $(p+1)$-simplex $F$ of $X$ such that $e_i F = f_i$ for $0\leq i \leq p+1$.
This establishes existence, uniqueness is similar to previous arguments.
We conclude that $\dectop X$ is $(n{-}1)$-coskeletal under the hypotheses of \eqref{item upod}.
\end{proof}

\begin{lemma}\label{lem dec coskel to coskel}
Let $X$ be a simplicial set and $n \geq 0$. 
If either $\dectop X$ or $\decbot X$ is $n$-coskeletal, then $X$ is $(n{+}2)$-coskeletal.
If, additionally, either 
\begin{enumerate}
\item $n=2k$ and $X$ satisfies $\uppev_{2k+1}^k$ or $\lowev_{2k+1}^k$, or
\item $n=2k+1$ and $X$ satisfies $\uppod_{2k+2}^k$,
\end{enumerate}
then $X$ is $(n{+}1)$-coskeletal.
\end{lemma}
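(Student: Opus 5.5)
By duality ($\dectop(X^\op) = (\decbot X)^\op$, and $\uppev$ is dual to $\lowev$ while $\uppod$ is self-dual), I will only treat the case where $\dectop X$ is $n$-coskeletal. Fix $m$, set $S=[m]$, and let $f_0,\dots,f_m$ be a compatible collection with $f_i\in X(S_i)$. The idea is to view an $m$-simplex of $X$ as an $(m-1)$-simplex of $\dectop X$, namely as an element of $(\dectop X)(S_m)=X(S)$. Its faces in $\dectop X$ are $e_i$ for $0\le i\le m-1$, and these land in $(\dectop X)(S_{i,m})=X(S_i)$. So for $0\le i\le m-1$, each $f_i\in X(S_i)$ is an $(m-2)$-simplex of $\dectop X$. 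Compatibility in $\dectop X$ is the condition $e_if_j=e_jf_i$ for $i,j<m$, which we already have. Hence whenever $m-1>n$, i.e.\ $m\ge n+2$, coskeletality of $\dectop X$ gives a unique $F\in X(S)$ with $e_iF=f_i$ for $i\le m-1$.

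We still need $e_mF=f_m$. Both $e_mF$ and $f_m$ lie in $X(S_m)=(\dectop X)(S_{m-1,m})$, so we compare them as $(m-2)$-simplices of $\dectop X$. Their $\dectop$-faces are $e_i$ for $i\le m-2$, and
\[
e_ie_mF = e_me_iF = e_mf_i = e_if_m .
\]
If $m-2>n$, coskeletality of $\dectop X$ (uniqueness only) gives $e_mF=f_m$. This settles $m\ge n+3$ and proves that $X$ is $(n{+}2)$-coskeletal. Uniqueness of $F$ among fillers of all the $f_i$ is automatic, since any such filler is in particular a filler of $f_0,\dots,f_{m-1}$.

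For the sharper claim we must also handle $m=n+2$. Here $F$ exists as above, but we cannot use coskeletality to get $e_mF=f_m$, because $e_mF$ and $f_m$ are elements of $X(S_m)=X([0,n+1])$ that agree on the faces $e_i$ for $i\le n$ but need not agree on $e_{n+1}$. The plan is to use the Segal hypothesis on $X$ in dimension $n+1$ instead.
\begin{itemize}
\item In case (1), $n=2k$ and we use $\uppev^k_{2k+1}$. The set $I=\{0,2,\dots,2k\}\subset[0,2k+1]$ is gapped and avoids the top element $2k+1$. The computation above gives $e_ie_mF=e_if_m$ for all $i\in I$, so the uniqueness part of the cartesian cube gives $e_mF=f_m$.
\item In case (1) with $\lowev^k_{2k+1}$ instead, the gapped set must avoid $0$, and the faces we control are indexed by $\le 2k$. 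We take $I=\{1,3,\dots,2k-1\}\cup\{\,\}$... but this has only $k$ elements, which is not enough. So in this case I would switch to $\decbot X$, which is not directly given. Instead I would invoke duality here: the statement with $\lowev$ and $\dectop$ is dual to the statement with $\uppev$ and $\decbot$.
\end{itemize}
This means I must check that each hypothesis pairs correctly with its décalage. With $\dectop$, the controlled faces are $e_i$ for $i\le n$ in $[0,n+1]$, so the natural condition is "avoid the top element", which is $\uppev$ (or $\uppod$). With $\decbot$, the natural condition is $\lowev$ (or $\uppod$). The paper's "either" seems to allow any pairing, so this mismatch needs to be resolved.

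For the mismatched pairing I would try the alternative of using a gapped set that avoids $0$. In $[0,2k+1]$ the set $\{1,3,\dots,2k+1\}$ contains the top element $2k+1$, where control fails. So I expect the mismatched case to require that both décalages be coskeletal, or else some auxiliary argument. I would first reread the statement as the dual-matched pairing (or note that the later applications only use the matched version), and prove that.

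In case (2), $n=2k+1$ and we use $\uppod^k_{2k+2}$ in $[0,2k+2]$. The set $I=\{1,3,\dots,2k+1\}$ avoids both $0$ and the top element $2k+2$, so the same argument gives $e_mF=f_m$. This works for either décalage, since $\uppod$ is self-dual.

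The main obstacle I expect is exactly this matching between the hypothesis and the décalage in case (1), together with verifying that the cube's uniqueness suffices with no existence claims needed. The latter holds here because $e_mF$ and $f_m$ are both genuine elements of $X(S_m)$.
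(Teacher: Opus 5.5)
There is a genuine gap, in case (1) when the hypothesis does not match the d\'ecalage.

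\textbf{What is correct.} Your proof of $(n{+}2)$-coskeletality, of case (1) with $\uppev^k_{2k+1}$ for $\dectop X$, and of case (2) is correct. It is essentially the paper's argument.

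\textbf{The gap.} You leave unproven the pairing $\dectop X$ with $\lowev^k_{2k+1}$, and dually $\decbot X$ with $\uppev^k_{2k+1}$. You then suggest rereading the statement as the matched version. But the lemma as stated includes these pairings, and they do hold.

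The gap comes from a false claim. You say that for $m=n+2$ the elements $e_mF$ and $f_m$ of $X([0,n+1])$ ``need not agree on $e_{n+1}$.'' They do agree there. You chose $F$ with $e_iF=f_i$ for every $i\le m-1$, including $i=m-1=n+1$. So the same computation you did for $i\le m-2$ gives
\[ e_{m-1}e_mF = e_me_{m-1}F = e_mf_{m-1} = e_{m-1}f_m . \]
You only recorded $i\le m-2$ because those are the faces of $X(S_m)$ when it is viewed as $(\dectop X)(S_{m-1,m})$. As elements of $X$, however, $e_mF$ and $f_m$ have the same full boundary $\{e_i(-)\}_{0\le i\le m-1}$. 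What actually breaks at $m=n+2$ is only that $n$-coskeletality of $\dectop X$ gives no uniqueness for $n$-simplices. No face is left uncontrolled.

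\textbf{The fix.} Once you record agreement on all faces, no matching between d\'ecalage and condition is needed. In $[0,m-1]$ there is exactly one $(k{+}1)$-element gapped set of each required kind:
\begin{itemize}
\item avoiding $0$, which for $m-1=2k+1$ is $\{1,3,\dots,2k+1\}$;
\item avoiding $m-1$;
\item avoiding both.
\end{itemize}
In each case the set indexes faces on which $e_mF$ and $f_m$ agree. The uniqueness half of the relevant cartesian cube then gives $e_mF=f_m$. This is how the paper treats all three conditions uniformly, after reducing to $\dectop X$ by duality.
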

In particular, the first condition holds if $X$ is upper or lower $2k$-critical, while the second holds if $X$ is upper $(2k{+}1)$-critical.

\begin{proof}
By duality it suffices to prove the result when $\dectop X$ is $n$-coskeletal. 
Let $p > n+1$, and suppose $f_0, \dots, f_p$ are compatible $(p-1)$-simplices of $X$.
Our aim is to show that there is a unique element $F\in X_p$ such that $e_i F = f_i$ for $i=0,\dots, p$.
If $F,F' \in X_p = (\dectop X)_{p-1}$ both have this property, then $e_i F = e_i F'$ for $i \leq p-1$; since $p-1 > n$, coskeletality of $\dectop X$ implies $F = F'$. This establishes uniqueness.

We turn to existence.
The elements $f_0, \dots, f_{p-1}$ are compatible $(p{-}2)$-simplices of $\dectop X$, so there exists a unique $F \in (\dectop X)_{p-1} = X_p$ such that $e_i F = f_i$ for $i=0, \dots, p-1$.
We must show $e_p F = f_p$.
We have equality of compatible collections
\[
	\{ e_i f_p \}_{0 \leq i \leq p-1} = \{ e_p f_i \}_{0 \leq i \leq p-1} = \{ e_p e_i F \}_{0 \leq i \leq p-1} = \{ e_i (e_p F) \}_{0 \leq i \leq p-1}
\]
of $(p-2)$-simplices of $X$.
This implies $\{ e_i f_p \}_{0 \leq i \leq p-2} = \{ e_i (e_p F) \}_{0 \leq i \leq p-2}$ is a compatible collection of $(p-3)$-simplices of $\dectop X$, so if $p-2 > n$ we have $f_p = e_p F$ by $n$-coskeletality of $\dectop X$.
In particular, we have established that $X$ is $(n{+}2)$-coskeletal.

We are now left with the situation $p=n+2$, assuming one of the additional conditions from the statement, namely that $X$ satisfies one of the conditions $\lowev^k_{p-1}$, $\uppev^k_{p-1}$, or $\uppod^k_{p-1}$, where $k= \lfloor n/2 \rfloor$.
In each case there is exactly one $(k+1)$-element gapped subset $I\subset [p-1]$ which misses $0$,  $p-1$, or both.
By the uniqueness part of the additional assumption, since $\{ e_i f_p \}_{i\in I} = \{ e_i (e_p F) \}_{i\in I}$, we must have $f_p = e_p F$.
Thus $X$ is $(n{+}1)$-coskeletal.
\end{proof}

\begin{theorem}\label{hsc implies coskel}
If a simplicial set is upper or lower $d$-Segal for some $d\geq 0$, then it is $(d{+}1)$-coskeletal.
\end{theorem}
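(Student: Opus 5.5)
We argue by cases on the parity of $d$ and on upper versus lower, reducing everything to \cref{prop lower odd implies coskel} through the path space criterion (\cref{path space criterion}) and the passage from d\'ecalage back to $X$ (\cref{lem dec coskel to coskel}). First we dispose of $d=0$, and of the upper $1$-Segal case. A $0$-Segal simplicial set is constant, hence $1$-coskeletal. An upper $1$-Segal set is in particular $2$-Segal, hence $3$-coskeletal once the $d=2$ case is established, and it is upper $1$-critical; by \cref{lem k=0 case} it is then constant, hence $2$-coskeletal. (Alternatively, the general upper odd argument below handles $k=0$ if one checks the constant case directly.) From now on write $d=2k-1$, $d=2k$, or $d=2k+1$ with $k\geq 1$.

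The lower $(2k{-}1)$-Segal case is exactly \cref{prop lower odd implies coskel}, which gives $2k$-coskeletality.

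For the lower (respectively upper) $2k$-Segal case, \cref{path space criterion} says $\decbot X$ (respectively $\dectop X$) is lower $(2k{-}1)$-Segal. By \cref{prop lower odd implies coskel}, that d\'ecalage is therefore $2k$-coskeletal. Since $X$ is lower (respectively upper) $2k$-Segal, it satisfies $\lowev^k_{2k+1}$ (respectively $\uppev^k_{2k+1}$). \Cref{lem dec coskel to coskel} with $n=2k$ then upgrades this to $(2k{+}1)$-coskeletality of $X$, as required.

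For the upper $(2k{+}1)$-Segal case, \cref{path space criterion} says $\dectop X$ is lower $2k$-Segal. By the previous paragraph applied to $\dectop X$, it is $(2k{+}1)$-coskeletal. Since $X$ satisfies $\uppod^k_{2k+2}$, \cref{lem dec coskel to coskel} with $n=2k+1$ gives that $X$ is $(2k{+}2)$-coskeletal.

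The main point requiring care is the bookkeeping of indices, so that the additional hypotheses of \cref{lem dec coskel to coskel} really are among the Segal conditions assumed on $X$. They are: these are precisely the lowest non-vacuous conditions in each case, so they are part of the upper or lower $d$-Segal hypothesis. The small cases $k=0$ in the upper odd case need separate treatment via \cref{lem k=0 case}, which we handle as above.
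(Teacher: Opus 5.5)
Your proof is correct and follows the paper's argument essentially verbatim: the path space criterion, then \cref{prop lower odd implies coskel}, then \cref{lem dec coskel to coskel} using the lowest non-vacuous Segal condition. The only difference is the upper $1$-Segal case, where your detour (Poguntke's implication to $2$-Segal, then the $d=2$ case, then \cref{lem k=0 case}) is valid but unnecessary, because the paper already records that upper $1$-Segal, like upper or lower $0$-Segal, simply means constant, hence $1$-coskeletal.
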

\begin{proof}
Let $X\in \sset$.
If $X$ is upper or lower $0$-Segal, or if $X$ is upper $1$-Segal, then it is constant, hence $1$-coskeletal.
Now suppose $k \geq 1$.
If $X$ is upper $2k$-Segal, then $\dectop X$ is lower $(2k{-}1)$-Segal.
By \cref{prop lower odd implies coskel}, $\dectop X$ is $2k$-coskeletal.
It follows from \cref{lem dec coskel to coskel} that $X$ is $(2k{+}1)$-coskeletal.
If $X$ is lower $2k$-Segal, then using duality we infer that $X$ is $(2k{+}1)$-coskeletal.
If $X$ is upper $(2k{+}1)$-Segal, then $\dectop X$ is lower $2k$-Segal, hence $(2k{+}1)$-coskeletal.
By \cref{lem dec coskel to coskel}, $X$ is $(2k{+}2)$-coskeletal.
The remaining case is \cref{prop lower odd implies coskel}.
\end{proof}

In some cases one may prove that certain $d$-Segal sets enjoy a much lower coskeletality than $d+1$. 
For instance, in the case of `partial groups', $2k$-Segal implies $k$-coskeletal \cite[Theorem 6.6]{Hackney:PGSO}.
We also have the following:

\begin{remark}
Suppose $X$ is a simplicial abelian group, and let $C$ be the associated normalized chain complex.
Theorem 4.12 of  \cite{DyckerhoffJassoWalde:SSHART} implies that $X$ is $2k$-Segal if and only if $C_m = 0$ for $m > k$. 
Meanwhile, $X$ is $n$-coskeletal (as a simplicial set) if and only if $C_m = 0$ for $m > n+1$ and $H_mC = 0$ for $m\geq n$ \cite[\href{https://kerodon.net/tag/0528}{Tag 0528}]{kerodon}.
Thus each $2k$-Segal simplicial abelian group is $(k{+}1)$-coskeletal.
\end{remark}

\begin{theorem}\label{thm other cases coskel to segal}
Let $k \geq 0$ and $X\in \sset$.
\begin{enumerate}
\item If $X$ is $(2k{+}2)$-coskeletal and lower $2k$-critical, 
then $X$ is lower $2k$-Segal.\label{item other case lowev}
\item If $X$ is $(2k{+}2)$-coskeletal and upper $2k$-critical, 
then $X$ is upper $2k$-Segal.\label{item other case uppev}
\item If $X$ is $(2k{+}3)$-coskeletal and upper $(2k{+}1)$-critical, 
then $X$ is upper $(2k{+}1)$-Segal.\label{item other case uppod}
\end{enumerate}
\end{theorem}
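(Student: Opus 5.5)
The plan is to reduce each of the three cases to \cref{thm lowod case coskel to segal} (or, for $k=0$, to \cref{lem k=0 case}) by passing to décalages. The tools are \cref{prop coskel to dec coskel}, which lowers coskeletality by one, and \cref{easy psc}, which transfers criticality. The full Segal conclusion then comes back up through the Path Space Criterion \cref{path space criterion}. Case \eqref{item other case lowev} follows from case \eqref{item other case uppev} by duality, since $X^\op$ is $(2k{+}2)$-coskeletal, $\uppev$ and $\lowev$ are exchanged by $\op$, and $X$ is lower $2k$-Segal if and only if $X^\op$ is upper $2k$-Segal. So it suffices to treat \eqref{item other case uppev} and \eqref{item other case uppod}.

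First dispose of $k=0$. By \cref{lem k=0 case}, a $2$-coskeletal upper (or lower) $0$-critical $X$, or a $3$-coskeletal upper $1$-critical $X$, is constant. Constant simplicial sets are $0$-Segal, hence upper $1$-Segal by Poguntke's result (or directly, since all face maps are identities and every cube $X\cube{I}$ is constant and therefore cartesian). So we may assume $k\geq 1$.

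For \eqref{item other case uppev} with $k\geq 1$, \cref{prop coskel to dec coskel}\eqref{item upev} shows that $\dectop X$ is $(2k{+}1)$-coskeletal. By \cref{easy psc}\eqref{PSC udec}, the conditions $\uppev^k_{2k+1}$ and $\uppev^k_{2k+2}$ on $X$ translate into $\lowod^k_{2k}$ and $\lowod^k_{2k+1}$ on $\dectop X$; that is, $\dectop X$ is lower $(2k{-}1)$-critical. Then \cref{thm lowod case coskel to segal} shows that $\dectop X$ is lower $(2k{-}1)$-Segal, and \cref{path space criterion} gives that $X$ is upper $2k$-Segal.

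For \eqref{item other case uppod} with $k \geq 1$, apply \cref{prop coskel to dec coskel}\eqref{item upod}: $\dectop X$ is $(2k{+}2)$-coskeletal. By \cref{easy psc}(3), $\uppod^k_{2k+2}$ and $\uppod^k_{2k+3}$ for $X$ become $\lowev^k_{2k+1}$ and $\lowev^k_{2k+2}$ for $\dectop X$, so $\dectop X$ is lower $2k$-critical. Case \eqref{item other case lowev}, already established, then makes $\dectop X$ lower $2k$-Segal, and \cref{path space criterion} gives that $X$ is upper $(2k{+}1)$-Segal. Alternatively one could apply \cref{prop coskel to dec coskel} twice to get $\decbot\dectop X$ $(2k{+}1)$-coskeletal; this would require the upper-odd criticality of $X$ to pass to the intermediate décalage, which is exactly what \eqref{item upod} together with \eqref{item upev} and its dual provide. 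The main point to watch is the bookkeeping of which criticality passes to which décalage, and the ordering of the cases: \eqref{item other case uppod} must be proved after \eqref{item other case lowev}. Beyond that, no genuinely new combinatorics is needed.
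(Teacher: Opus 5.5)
Your proposal is correct and follows the paper's proof essentially verbatim. You handle $k=0$ via \cref{lem k=0 case}, and you prove the upper $2k$ case by passing to $\dectop X$ through \cref{prop coskel to dec coskel}, \cref{easy psc}, \cref{thm lowod case coskel to segal} and \cref{path space criterion}; you then get the lower $2k$ case by duality and the upper $(2k{+}1)$ case the same way, with the lower $2k$ case replacing \cref{thm lowod case coskel to segal}, including the correct ordering of the cases.
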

\begin{proof}
If $k = 0$, then $X$ is constant by \cref{lem k=0 case}, hence is $0$-Segal. Suppose $k \geq 1$.
Assuming that $X$ is $(2k{+}2)$-coskeletal and 
upper $2k$-critical, 
we have that $\dectop X$ is $(2k{+}1)$-coskeletal and 
lower $(2k{-}1)$-critical
by \cref{prop coskel to dec coskel} and \cref{easy psc}.
It follows that $\dectop X$ is lower $(2k{-}1)$-Segal by \cref{thm lowod case coskel to segal}. Thus $X$ is upper $2k$-Segal by the path space criterion, so we have established \eqref{item other case uppev}.
Item \eqref{item other case lowev} follows from \eqref{item other case uppev} by duality, while \eqref{item other case uppod} is proved in an analogous way (using \eqref{item other case lowev} in place of \cref{thm lowod case coskel to segal}).
\end{proof}

\begin{theorem}\label{thm main theorem}
Let $X$ be a simplicial set and $d \geq 0$.
If $X$ is $(d{+}2)$-coskeletal and is upper (resp.\ lower) $d$-critical, then it is upper (resp.\ lower) $d$-Segal.
Conversely, if $X$ is upper or lower $d$-Segal, then it is $(d{+}1)$-coskeletal. \qed
\end{theorem}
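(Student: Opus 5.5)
The main theorem is an assembly of results already established, so my plan is a case analysis on the parity of $d$ and on upper versus lower. For the forward direction, write $d$ as $2k-1$ or $2k$ (lower case), or as $2k$ or $2k+1$ (upper case), with $k$ determined by $d$.

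\emph{Lower $(2k{-}1)$-critical ($k\geq 1$).} This case is exactly \cref{thm lowod case coskel to segal}, since $d+2=2k+1$.

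\emph{Lower or upper $2k$-critical ($k\geq 0$), and upper $(2k{+}1)$-critical ($k\geq0$).} These cases are items \eqref{item other case lowev}, \eqref{item other case uppev} and \eqref{item other case uppod} of \cref{thm other cases coskel to segal}. The coskeletality degrees $2k+2$ and $2k+3$ there match $d+2$.

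\emph{Lower $-1$.} The value $d=-1$ does not arise, since $d\geq 0$, and $d=0$ is covered by the $k=0$ case. The upper/lower terminology covers every $d\geq 0$: lower $d$ for odd $d$ corresponds to $k\geq1$, and upper $1$ corresponds to $k=0$.

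For the converse, \cref{hsc implies coskel} states exactly that upper or lower $d$-Segal implies $(d{+}1)$-coskeletal.

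The only thing I need to check is the bookkeeping: for every $d\geq 0$ and each choice of upper or lower, some case above applies with matching coskeletal degree and matching critical conditions, as in \cref{def d critical}. Odd $d=2k-1$ in the lower case requires $k\geq1$, which holds since $d\geq1$. In the upper odd case $d=2k+1$ allows $k\geq0$. I expect no genuine obstacle here; the substantive work has already been done in the cited results.
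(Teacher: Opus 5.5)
Your proposal is correct and matches the paper exactly: the main theorem carries a \qed because it is just the assembly of \cref{thm lowod case coskel to segal} (lower odd $d=2k-1$, $k\geq 1$), the three items of \cref{thm other cases coskel to segal} (lower and upper even $d=2k$, and upper odd $d=2k+1$, with $k\geq 0$), and \cref{hsc implies coskel} for the converse. Your bookkeeping is also right: each $d\geq 0$, upper or lower, falls into one of these cases with coskeletal degree matching $d+2$.
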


\begin{remark}\label{rmk simplicial anima}
The corresponding statement for simplicial spaces is false.
Let $M$ be a nonempty space, and $X$ the associated constant simplicial space, which is automatically $0$-Segal (hence $d$-Segal for all $d$).
The $p$-coskeleton of $X$ (see \cite[\href{https://kerodon.net/tag/05GU}{Tag 05GU}]{kerodon}) is given in dimension $n$ by 
\[
	(\cosk_p X)_n \simeq \lim_{(\Delta_{/[n]} \times_\Delta \Delta_{\leq p})^\op} M \simeq \map(|\Delta_{/[n]} \times_\Delta \Delta_{\leq p}|, M),
\]
the second equivalence because $X$ is constant, 
where $\Delta_{\leq p} \subset \Delta$ is the full subcategory on $[q]$ for $q \leq p$.
Since $|\Delta_{/[n]} \times_\Delta \Delta_{\leq p}| \simeq |\sk_p \Delta^n|$, taking $n=p+1$ gives $|\partial \Delta^{p+1}| \simeq S^p$, and the unit map $X_{p+1} \to (\cosk_p X)_{p+1}$ is identified with restriction along $\sk_p \Delta^{p+1} \subset \Delta^{p+1}$, i.e.\ with the inclusion of constant maps $M \to M^{S^p}$.
This is a section of evaluation at a basepoint, whose fibers are $\Omega^p M$, hence is an equivalence if and only if $\Omega^p M$ is contractible.

Now suppose $M$ has infinitely many nontrivial homotopy groups (e.g.\ $M=S^2$).
Then $X$ immediately gives a counterexample to the second statement, as $\Omega^p M$ is not contractible for any $p$, but $X$ is $d$-Segal for all $d\geq 0$.

For the first statement, set $Y \coloneq \cosk_{d+2} X$, which is $(d{+}2)$-coskeletal. 
As the unit $X \to Y$ is an equivalence in simplicial dimensions up to $d+2$, we have that $Y$ is $d$-critical since $X$ is so.
But naturality implies that $Y$ cannot satisfy the $d$-Segal condition in simplicial dimension $d+3$, since $X$ is known to, and $M \simeq X_{d+3} \to Y_{d+3} \simeq M^{S^{d+2}}$ is not an equivalence.
\end{remark}

\bibliographystyle{amsplain}
\bibliography{higher}
\end{document}